\def\ba#1\ea{\begin{linenomath}\begin{align*}#1\end{align*}\end{linenomath}}
\def\ban#1\ean{\begin{linenomath}\begin{align}#1\end{align}\end{linenomath}}
\numberwithin{equation}{section}
\newtheorem{theorem}{Theorem}[section]
\newtheorem{lemma}[theorem]{Lemma}
\newtheorem{corollary}[theorem]{Corollary}
\newtheorem{proposition}[theorem]{Proposition}
\newtheorem{conjecture}[theorem]{Conjecture}
\theoremstyle{definition}
\newtheorem{definition}[theorem]{Definition}
\newtheorem*{remark}{Remark}
\title{The trace-reinforced ants process does not find shortest paths}
\author{Daniel Kious\thanks{Department of Mathematical Sciences, University of Bath, Claverton Down, BA2 7AY Bath, UK.\newline Email: \texttt{d.kious/c.mailler@bath.ac.uk}} \thanks{DK is grateful to EPSRC for support through the grant EP/V00929X/1.} \and C\'ecile Mailler\footnotemark[1] \thanks{CM is grateful to EPSRC for support through the fellowship EP/R022186/1.} \and Bruno Schapira\thanks{Aix-Marseille Universit\'e, CNRS, Centrale Marseille, I2M, UMR 7373, 13453 Marseille, France.\newline Email: \texttt{bruno.schapira@univ-amu.fr}}}
\newcommand{\bs}{\boldsymbol}
\newcommand{\sss}{\ensuremath{\scriptscriptstyle}}
\begin{document}
\maketitle 

\begin{abstract}
In this paper, we study a probabilistic reinforcement-learning model
for ants searching for the shortest path(s) 
between their nest and a source of food.
In this model,
the nest and the source of food are two distinguished nodes $N$ and $F$ 
in a finite graph $\mathcal G$.
The ants perform a sequence of random walks on this graph, 
starting from the nest and stopped when first hitting the source of food.
At each step of its random walk, the $n$-th ant chooses to cross a neighbouring edge with probability proportional to the number of preceding ants that crossed that edge at least once.
We say that {\it the ants find the shortest path} if, almost surely as the number of ants grow to infinity, almost all the ants go from the nest to the source of food through one of the shortest paths, without {losing} time on other edges of the graph.

Our contribution is three-fold: (1) We prove that, if $\mathcal G$ is a tree rooted at $N$ whose leaves have been merged into node $F$, and with one edge between $N$ and $F$, then the ants  indeed find the shortest path. (2) In contrast, we provide three examples of graphs on which the ants do not find the shortest path, suggesting that in this model and in most graphs, ants do not find the shortest path. (3) In all these cases, we show that the sequence of normalised edge-weights converge to a {\it deterministic} limit, despite a linear-reinforcement mechanism, and we conjecture that this is a general fact which is valid on all finite graphs. To prove these results, we use stochastic approximation methods, and in particular the ODE method. One difficulty comes from the fact that this method relies on understanding the behaviour at large times of the solution of a non-linear, multi-dimensional ODE.
\end{abstract}

\section{Introduction and main results}
{\bf Context:}
It is believed that ants are able to find shortest paths between their nest and a source of food
with no other means of communication than the pheromones they lay behind them.
This phenomenon has been observed empirically in the biology literature 
(see, e.g.,~\cite{ants_89, Deneu90, current_ants}), and reinforcement-learning has been proposed as a model for {describing it} (see, e.g.~\cite{current_ants}).
In the survey of \cite[Chapter~1]{book_ant}, this phenomenon is called {\it stigmergy}: ``{\it ants stimulate other ants by modifying the environment via pheromone trail updating}''.

In this paper, we study a probabilistic reinforcement-learning model 
for this phenomenon of ants finding shortest path(s) between their nest and a source of food.
In this model, which was introduced in our previous paper~\cite{KMS},
the nest and the source of food are two nodes $N$ and $F$ of a finite graph $\mathcal G$,
and the ants perform successive random walks starting from~$N$ and stopped when first hitting~$F$.
The distribution of the $n$-th ant's walk depends on the trajectory of the previous $n-1$ random walks in a way that models ants leaving pheromones on the edges they cross:
each edge has a weight which is equal to~{1} at the start and which increases by~1
at time~$n$ if and only if the $n$-th ant has deposited pheromones on that edge.
The transition probabilities of the random walk of the $(n+1)$-th ant are proportional to the edge-weights at time~$n$.
The reinforcement is thus linear.

In~\cite{KMS}, the ants deposit pheromones only on their way back: 
i.e.\ when they come back to the nest after having hit~$F$.
Two cases are studied in~\cite{KMS}: 
\begin{itemize}
\item In the ``loop-erased'' ant process, or model (LE), ants come back to the nest following the loop-eras{ure} of the time-reversed version of their forward trajectory.
\item In the ``geodesic'' ant process, or model (G), they come back following the shortest path between $F$ and $N$ within the trace of their forward trajectory (ties are broken uniformly at random).
\end{itemize}
The conjecture is that, under these two versions of the model, when time goes to infinity, almost all ants go from $N$ to $F$ through a shortest path, i.e.\ {\it the ants indeed find the shortest path(s) between their nest and the food}. 
This conjecture is proved in~\cite{KMS} for model (LE) in the case when $\mathcal G$ is a series-parallel graph, and in the model (G) in the case when $\mathcal G$ is a five-edge graph called the lozenge.

\medskip
{\bf Main contribution:} In this paper, we look at the same model 
but assuming that ants deposit pheromones on their way forwards to the food, 
i.e.\ the weight of an edge increases by one at time $n$ if and only 
if the $n$-th ant has crossed this edge at least once on its way from $N$ to $F$. 
We call this model the ``trace-reinforced'' ant process, or model (T).
In the biology literature, all cases of ants depositing pheromones on their way forward{s}, 
backwards or both are considered (see~\cite[Chapter~1]{book_ant}).

Maybe surprisingly, this small change to the reinforcement rule leads to a drastically different behaviour:
indeed, we prove that, in the trace-reinforced ant process, in general,
{\it the ants do not find the shortest path(s) between their nest and the source of food}, 
except in some very particular cases. Indeed,
in Theorem~\ref{th:main}, we show that on a family of graphs called ``tree-like'', in which there is a unique edge between $N$ and $F$ ($N$ and $F$ are at distance~1), the ants do find the shortest path. However, {one can find graphs, with $N$ and $F$ at distance one, which are not tree-like and such that the ants do not find shortest paths} (see Proposition~\ref{prop:cornet}, Theorem~\ref{th:two_paths} and Proposition~\ref{prop:lozenge}).

The fact that ants do not always find shortest paths when depositing pheromones only on their way 
forward has been observed empirically on ants (see~\cite[Section~1.1.2]{book_ant}): 
``{\it The observation of real ant colonies has confirmed that ants that deposit pheromone only
when [going forward] to the nest are unable to find the shortest path between their nest and
the food source}''.
Our analysis proves that the model introduced in~\cite{KMS} exhibits the same behaviour.

Our second main finding, which might also be surprising at first glance, is that although we use a {\it linear} reinforcement mechanism (see the discussion below), in all the graphs we consider, the sequence of normalised edge-weights converges to a {\it deterministic} limit.
In fact, we conjecture that this is a general phenomenon, and that on all graphs with no multiple edges linked to node~$F$, the sequence of the normalised weights converges almost surely to a deterministic limit:
\begin{conjecture}\label{conj}
If, for all $n\geq 0$, for all $e\in E$, we let $W_e(n)$ denote the weight of edge~$e$ at time~$n$, 
then there exists a deterministic family $(\chi_e)_{e\in E}$ 
such that $(W_e(n)/n)_{e\in E}\to (\chi_e)_{e\in E}$ almost surely when $n\to+\infty$.
Furthermore, if in addition the distance between~$N$ and~$F$ is at least 2, then $\chi_e>0$ for all $e\in E$.
\end{conjecture}

\medskip
{\bf Discussion:}
Other {probabilistic reinforcement} models inspired by urns exist in the probability literature.
As far as we know these models are all self-reinforced random walks models with super-linear reinforcement; 
see for example Le Goff and Raimond~\cite{LGR} and Erhard, Franco and Reis~\cite{EFR}. 

The ant process can be seen as a ``path formation'' model: 
the quantity of interest is the subgraph of $\mathcal G$ 
obtained by removing from~$\mathcal G$ all edges whose normalised weight converges to zero.
If this limiting graph is different from~$\mathcal G$, then we say that ``some path(s) has formed''.

Another model for path formation is the P\'olya urns with graph based interactions
of Bena\"im, Benjamini, Chen and Lima~\cite{BBCL} 
and its generalisation to WARMs of~\cite{WARM}. 
In the latter (and in more recent paper on the same model such as Hirsch, Holmes and Kleptsyn~\cite{WARM_new}), 
only super-linear reinforcement is considered because it leads to path formation; 
in contrast, it is believed that, under linear reinforcement, 
the limiting graph would be equal to~$\mathcal G$.
{In~\cite{CH21}, Couzini\'e and Hirsch consider the sub-linear WARM model. 
They show that, for bounded degree (possibly infinite graphs), 
if the reinforcement is sufficiently weak, or if $\mathcal G = \mathbb Z$,
then the limiting edge-weights exist and are deterministic.}
In~\cite{BBCL}, and later~\cite{CL} and~\cite{Lima}, 
the cases of sub-linear, super-linear and linear reinforcement are considered.
In the linear case, if the original graph $\mathcal G$ is regular and not bipartite, 
then the vector of normalised weights converges almost surely 
to a {\it non-deterministic} limit.

Given these examples from the literature, it is quite surprising that in the ant process, with linear 
reinforcement, the vector of normalised weights always converges to a {\it deterministic} limit.
In fact, this difference of behaviour can be observed when looking at the simplest probabilistic model with linear reinforcement: (generalised) P\'olya urns.
A $d$-colour P\'olya urn of replacement matrix $R = (R_{i,j})_{1\leq i,j\leq d}$ is defined as follows:
at time zero, there is one ball of each colour in the urn, and
at every time step, we pick a ball uniformly at random among the balls in the urn, and if its colour was $i$, we return it to the urn together with $R_{i,j}$ balls of colour $j$ ($\forall 1\leq j\leq d$).
If $R$ is the identity, it is known (see Markov~\cite{Markov}) that the vector $\hat U(n)$ whose coordinates are the number of balls of each colour divided by~$n$ converges almost surely to a Dirichlet(1, \ldots, 1)-distributed random variable. 
In contrast, if the matrix $R$ is irreducible, then $\hat U(n)$ converges almost surely to a deterministic limit (see Janson~\cite{Janson04} and the references therein).

{Interestingly, in {a} not-yet-available paper~\cite{HK+} (whose results have been announced), 
Holmes and Klepstyn also exhibit a linear reinforcement model which, 
on some graphs, converges to a deterministic limit, a result that resonates with Conjecture~\ref{conj}.}

A similarity between this paper, the P\'olya urns with graph-based interaction of~\cite{BBCL} 
and the WARMs of~\cite{WARM} is the method of proof since 
we also use stochastic approximation. 
In particular, we use the ODE method for stochastic approximation, and apply results from Bena\"im~\cite{Benaim} and Pemantle~\cite{Pemantle}. 
However, the analysis of these stochastic approximations in the different examples of graphs we consider is quite different from the one in ~\cite{BBCL} and~\cite{WARM}. 
In fact, as we later explain in more details, 
each of our examples requires an ad-hoc argument, 
which suggest that proving our conjecture on the convergence of edge-weights in great generality is a difficult open problem.

That said, we prove a general result ensuring that on {\it any finite graph}, the sequence of normalised edge weights is a stochastic approximation, in a sense which is recalled in Definition~\ref{def.stoc.approx} below, with a vector field $F$ which is Lipschitz on a suitable convex compact subspace of the Euclidean space (see Proposition~\ref{prop.stoc.approx}).

\subsection{Definition of the model and main results}
Let $\mathcal G = (V,E)$ be a finite {(undirected)} graph with vertex set $V$ and edge set $E$, 
with two distinct marked nodes called $N$ (for ``nest'') and $F$ (for ``food'').
We define a sequence $({\bf W}(n) = (W_e(n))_{e\in E})_{n\geq 0}$ of random weights for the edges of $\mathcal G$ recursively as follows:
\begin{itemize}
\item At time zero, all weights are equal to~1, i.e.\ $W_e(0) = 1$ for all $e\in E$.
\item Given ${\bf W}(n)$, we sample a random walk $(X^{\sss (n+1)}_i)_{i\geq 0}$ 
on $\mathcal G$ according to the following distribution:
\begin{itemize}
\item the walk starts at node $N$, i.e.\ $X_0^{\sss (n+1)} = N$, 
\item it stops when first hitting $F$, i.e.\ $\mathbb P(X^{\sss (n+1)}_{i+1} = F | X^{\sss (n+1)}_{i}=F) = 1$ for all $i\geq 0$,
\item for all $i\geq 0$, for all {$u\in V\setminus\{F\}$}, $v\in V$, 
\[\mathbb P(X^{\sss (n+1)}_{i+1} = v | X^{\sss (n+1)}_{i}=u) = \frac{W_{\{u,v\}}(n)}{\sum_{u'\sim u}W_{\{u,u'\}}(n)} \bs 1_{\{u,v\}\in E},\]
where $u'\sim u$ if there is an edge linking the vertices $u'$ and $u$.
\end{itemize}
We let $\gamma(n+1)$ be the set of edges that were crossed at least once 
by the random walk $X^{\sss (n+1)}$; we call this the ``trace'' of the $(n+1)$-th walker.
For all $e\in E$, we set $W_e(n+1) = W_e(n)+\bs 1_{e\in \gamma(n+1)}$.
\end{itemize} 
We call this process the ``trace-reinforced'' ant process on $\mathcal G$.

In our first result, we focus on graphs that are ``tree-like'' in the following sense:

\begin{definition}
We say that a graph $\mathcal G = (V,E)$ with two marked nodes $N$ and $F$ 
is tree-like if the graph whose vertex set is $V\setminus \{F\}$ 
and whose edge set is $E$ minus all edges that contain~$F$ is a tree (i.e.\ a graph with no cycle).
\end{definition}

\begin{theorem}\label{th:main}
Assume that $\mathcal G = (V,E)$ is tree-like and that the edge $a=\{N,F\}$ belongs to $E$ with multiplicity~1. 
Then, almost surely when $n\to+\infty$,
\[\frac{W_a(n)}{n} \to 1
\quad\text{ and }\quad
\frac{W_e(n)}{n} \to 0, \text{ for all } e\in E\setminus\{a\}.\]
\end{theorem}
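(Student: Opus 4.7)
The plan is to cast the evolution of the normalised weights as a stochastic approximation and then analyse the associated ODE. By Proposition~\ref{prop.stoc.approx}, the vector $\mathbf{x}(n) := \mathbf{W}(n)/T(n)$, with $T(n) := \sum_{e\in E} W_e(n)$, is a stochastic approximation on the simplex with Lipschitz mean field
\[
F_e(\mathbf{x}) \;=\; p_e(\mathbf{x}) - q(\mathbf{x})\, x_e,
\]
where $p_e(\mathbf{x})$ is the probability that a random walker on $\mathcal G$ from $N$ to $F$ with transition weights proportional to $\mathbf{x}$ crosses the edge $e$ at least once, and $q(\mathbf{x}) = \sum_{e'\in E} p_{e'}(\mathbf{x})$ is the expected size of its trace. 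The ODE method of Bena\"im and Pemantle then reduces almost-sure convergence of $(\mathbf{x}(n))$ to the analysis of the limit sets of the flow $\dot{\mathbf{x}} = F(\mathbf{x})$.

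The heart of the proof is to show that the unique attracting equilibrium of this flow (reachable from the interior of the simplex) is the extreme point $\delta_a$ defined by $x^\ast_a = 1$ and $x^\ast_e = 0$ for $e\neq a$. That $\delta_a$ is an equilibrium is immediate: at $\delta_a$ the walker crosses only $a$, so $p_a = q = 1$ and $p_e = 0$ for all $e\neq a$. To rule out other equilibria, I would exploit the tree-like structure. Decompose each walker's trajectory into excursions from $N$: at every visit to $N$, the walker either uses $a$ and stops, or picks an edge $b_i$ incident to $N$ and enters the sub-tree rooted at the corresponding neighbour, returning to $N$ or reaching $F$ via an edge incident to $F$. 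Because every non-$a$ edge is a bridge separating its far endpoint from $N$, the probabilities $p_e(\mathbf{x})$ admit clean recursive expressions in terms of sub-excursion probabilities. At a putative equilibrium with $x^\ast_e > 0$ for some $e\neq a$, I expect these relations to propagate from the leaves towards $N$ and to contradict the conservation identity $\sum_{e \ni F} p_e(\mathbf{x}^\ast) = 1$ (which holds because the walker uses exactly one edge incident to $F$).

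Once $\delta_a$ is identified as the only relevant equilibrium, local attraction can be established either by linearising $F$ around $\delta_a$ and checking the spectrum in the tangent space of the simplex, or through a Lyapunov function such as $V(\mathbf{x}) = 1 - x_a$, which should be strictly decreasing along the flow in a neighbourhood of $\delta_a$. Combined with the ODE method, this gives $\mathbf{x}(n) \to \delta_a$ almost surely. Since $\mathbf{x}(n) \to \delta_a$ forces $T(n)/n \to q(\delta_a) = 1$ by Ces\`aro averaging of the trace sizes, one concludes $W_a(n)/n \to 1$ and $W_e(n)/n \to 0$ for every $e\neq a$.

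The main obstacle will be the global ODE analysis, especially uniqueness of the relevant equilibrium and the absence of more complicated chain-recurrent sets for a non-linear multi-dimensional flow. As emphasised in the introduction, each of the examples treated in this paper calls for an \emph{ad hoc} argument, so the tree-like hypothesis must enter essentially. My proposed route, an induction on sub-trees exploiting the bridge property of every non-$a$ edge, is where most of the technical work will lie.
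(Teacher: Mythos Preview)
Your proposal is a plan rather than a proof, and the gaps you yourself flag---uniqueness of the relevant equilibrium and absence of other chain-recurrent sets for the full flow---are genuine and not closed by what you wrote. The sentence ``I expect these relations to propagate from the leaves towards $N$ and to contradict the conservation identity'' is a hope, not an argument; likewise, local attraction of $\delta_a$ (via linearisation or a Lyapunov function valid only near $\delta_a$) says nothing about the global limit set. A minor additional slip: Proposition~\ref{prop.stoc.approx} is stated for the normalisation $\mathbf W(n)/(n+1)$ with field $F_e(\bs w)=p_e(\bs w)-w_e$, not for the simplex normalisation $\mathbf W(n)/T(n)$ you use.

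The paper's proof bypasses the multi-dimensional ODE analysis entirely; it is a short two-step urn comparison. First, Proposition~\ref{prop:liminf_NF=1} (itself an elementary conductance bound plus a one-dimensional $G$-urn argument) already gives $W_a(n)/n\to 1$ almost surely. Second, since $\mathcal G$ with $F$ removed is a tree rooted at $N$, the weight of any edge is dominated by the weight of the unique edge incident to $N$ on its path to $N$; so it suffices to handle edges $e\neq a$ with $N\in e$. For such $e$ the tree structure forces the first crossing of $e$ (if any) to be out of $N$, while crossing $a$ ends the walk, hence
\[
\mathbb P\big(e\in\gamma(n+1)\mid \mathbf W(n)\big)\ \le\ \frac{W_e(n)}{W_e(n)+W_a(n)}.
\]
Once $\hat W_a(n)\ge 1-\varepsilon$, the process $(W_e(n))$ is dominated by a $G$-urn with $G(x)=x/(x+1-\varepsilon)$, whose only stable fixed point is $\varepsilon$; Proposition~\ref{lem.urne} gives $\limsup \hat W_e(n)\le\varepsilon$, and letting $\varepsilon\downarrow 0$ finishes.

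So the tree-like hypothesis is not used to organise an induction on sub-trees inside an ODE, but to obtain a one-line stochastic domination by a two-edge graph. The convergence $\hat W_a(n)\to 1$, established first and separately, decouples the remaining coordinates into one-dimensional urn problems.
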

In other words, following this reinforcement algorithm, the ants eventually find the shortest path between their nest and the source of food, i.e.\ the proportion of ants that go from $N$ to $F$ by only crossing the edge $\{N,F\}$ is asymptotically equal to one, and the proportion of ants that cross any other edge asymptotically equals zero. 

Note that if the edge $\{N,F\}$ appears with multiplicity $\ell$ in $E$ (i.e.\ there are $\ell$ edges from $N$ to $F$ in parallel), then it is easy to deduce from Theorem~\ref{th:main} that the normalised weights of all other edges go to zero almost surely, and the weights of the $\ell$ edges from $N$ to $F$ converge almost surely, as a $\ell$-tuple, to a Dirichlet random variable with parameters $(1, \ldots, 1)$.

A natural extension of the set of tree-like graphs is the set of series-parallel graphs, which were considered for instance in \cite{HJ04} and in our previous paper \cite{KMS}. 
One could then ask whether the previous theorem extends to this class of graphs, that is, 
if the distance from the source to the food is one, do the weights of all edges not directly connected to both~$N$ and~$F$ go to zero?
Maybe surprisingly, the answer is no. Indeed our next result provides a counter-example, which is depicted by Figure~\ref{fig:cornet} and which we call the {\it cone} graph. 

\begin{figure}
\begin{minipage}[b]{.25\textwidth}\centering
\includegraphics[width=1.8cm]{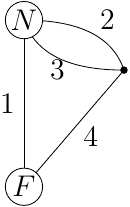}
\caption{The cone}
\label{fig:cornet}
\end{minipage}\hfill
\begin{minipage}[b]{.45\textwidth}\centering
\includegraphics[width=2.5cm]{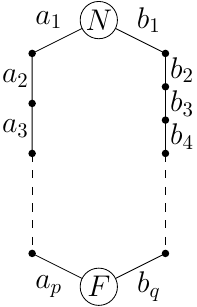}
\caption{The $(p,q)$-path graph of Theorem~\ref{th:two_paths}}
\label{fig:paths}
\end{minipage}\hfill
\begin{minipage}[b]{.25\textwidth}\centering
\includegraphics[width=2cm]{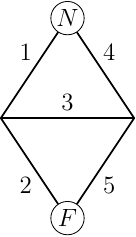}
\caption{The lozenge}
\label{fig:lozenge}
\end{minipage}
\end{figure}

\begin{proposition}\label{prop:cornet}
Let $\mathcal G$ be the graph of Figure~\ref{fig:cornet}. 
If we let $W_i(n)$ be the weight of edge~$i$ at time~$n$ (using the numbering of edges of Figure~\ref{fig:cornet}) and ${\bf W}(n) = (W_i(n))_{1\leq i\leq 4}$, then
almost surely when $n\to+\infty$,
\[\frac{{\bf W}(n)}n \to (1, \nicefrac13, \nicefrac13, 0).\]
\end{proposition}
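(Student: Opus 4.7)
\emph{Proof plan.} I will cast the dynamics as a stochastic approximation and analyse the associated ODE. The cone of Figure~\ref{fig:cornet} has three vertices $\{N,v,F\}$ and four edges, which I label so that $1$ is the direct edge $\{N,F\}$, $2$ and $3$ are the two parallel edges between $N$ and $v$, and $4$ is $\{v,F\}$. By Proposition~\ref{prop.stoc.approx}, the normalised weight vector $\mathbf{x}(n):=\mathbf{W}(n)/n$ is a stochastic approximation of the ODE
\[
\dot{\mathbf{x}} = F(\mathbf{x}) := \pi(\mathbf{x}) - \mathbf{x},
\]
where $\pi_e(\mathbf{x})$ denotes the probability that edge~$e$ belongs to the trace of one walk on the cone with edge weights $\mathbf{x}$. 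By Bena\"{\i}m's theorem, $\mathbf{x}(n)$ converges almost surely to a connected component of the chain-recurrent set of this ODE.

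The first step is to compute $\pi$ explicitly. The walker bounces between $N$ and $v$ and stops as soon as it selects edge~$1$ at $N$ or edge~$4$ at $v$. Writing $p_1 = x_1/(x_1+x_2+x_3)$ and $q_4 = x_4/(x_2+x_3+x_4)$ for these terminating probabilities, a one-step recursion yields
\[
\pi_1(\mathbf{x}) = \frac{p_1}{p_1 + q_4(1-p_1)}, \qquad \pi_1(\mathbf{x}) + \pi_4(\mathbf{x}) = 1,
\]
while summing a geometric series over successive excursions from~$N$ produces closed-form rational expressions for $\pi_2$ and $\pi_3$. A direct check then confirms that $\mathbf{x}^* := (1,\tfrac13,\tfrac13,0)$ is a zero of $F$.

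To identify $\mathbf{x}^*$ as the relevant attractor I will classify the equilibria of $F$. The $\mathbb{Z}/2$ symmetry swapping edges~$2$ and $3$ is preserved by the flow, so the analysis essentially restricts to the invariant subspace $\{x_2=x_3\}$, where the equilibrium equation reduces to a low-degree polynomial system in two variables. Its solutions are $\mathbf{x}^*$, a symmetric counterpart with $x_1=0$, $x_4=1$, $x_2=x_3=(1+\sqrt{13})/6$, and the degenerate corners. A linearisation at the symmetric counterpart produces a strictly positive eigenvalue in the direction of increasing $x_1$, so Pemantle's theorem on non-convergence to linearly unstable equilibria rules out that limit. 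At $\mathbf{x}^*$ a Taylor expansion gives $\pi_4(\mathbf{x}) - x_4 = -\tfrac52 x_4^2 + O(x_4^3)$, so the face $\{x_4=0\}$ is attracting; restricted to that face, the cone reduces to a tree-like graph with a single direct $\{N,F\}$ edge, whose ODE is driven to $\mathbf{x}^*$ as in the proof of Theorem~\ref{th:main}.

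I expect the central obstacle to be twofold. First, the linearisation of $F$ at $\mathbf{x}^*$ is degenerate in the $x_4$ direction (the corresponding eigenvalue vanishes), so the attraction to $\mathbf{x}^*$ is only polynomial rather than exponential, and quantifying this slow contraction within the stochastic approximation scheme requires a treatment not directly covered by standard Hartman--Grobman reasoning. Second, ruling out non-equilibrium attractors (periodic orbits, heteroclinic cycles) of the global flow will require an ad hoc argument, most cleanly achieved by exhibiting a strict Lyapunov function---for instance a monotone combination of $x_1$ and $-x_4$, or an entropy-like functional in the ratios $x_2/x_3$ and $x_4/(x_2+x_3)$---that decreases along every non-equilibrium trajectory.
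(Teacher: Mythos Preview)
Your framework is right---the cone fits into the stochastic approximation of Proposition~\ref{prop.stoc.approx} and the ODE method is the correct tool---but the plan has genuine gaps, and the paper's route is both simpler and sidesteps precisely the obstacles you flag.

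\medskip
\textbf{The paper's shortcut.} The paper never analyses the full four-dimensional flow. It first invokes Proposition~\ref{prop:liminf_NF=1} (a general fact for any finite graph with $\{N,F\}\in E$) to get $\hat W_1(n)\to 1$ and $\hat W_4(n)\to 0$ almost surely, \emph{before} any ODE work. Then, by a separate urn-domination argument---comparing $W_2\wedge W_3$ to a reflected simple random walk on $\mathbb Z_+$, not by anything spectral---it shows $\liminf_n \hat W_2(n)\wedge \hat W_3(n)>0$. These two probabilistic facts pin the limit set $L(\hat{\mathbf W})$ inside $\mathcal U=\{w_1=1,\,w_4=0,\,w_2w_3\neq 0\}$, and on that two-dimensional face the phase portrait is elementary: a sign analysis of $F_2,F_3$ shows the unique equilibrium in $\mathcal U$ is $(1,\tfrac13,\tfrac13,0)$ and that every trajectory in $\mathcal U$ converges there. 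Corollary~\ref{cor:pemantle} finishes. No global Lyapunov function, no degenerate-eigenvalue analysis, no Pemantle non-convergence theorem.

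\medskip
\textbf{Where your plan actually breaks.} First, the ``degenerate corner'' $(1,0,0,0)$ is not linearly unstable: from $p_2(1,x_2,x_3,0)=x_2(x_2+2x_3)/\big((1{+}x_2{+}x_3)(x_2{+}x_3)-x_3^2\big)$ one sees the linearisation of $(F_2,F_3)$ there vanishes identically, and along $x_3=0$ one even has $F_2=-x_2^2/(1+x_2)<0$. Pemantle's theorem does not apply, and this is exactly why the paper resorts to the urn/random-walk argument above rather than any spectral one. Second, your claim that on the face $\{x_4=0\}$ ``the cone reduces to a tree-like graph \ldots\ as in the proof of Theorem~\ref{th:main}'' is wrong: with edge~4 deleted, vertex $v$ is a dead end, not a leaf merged into $F$; Theorem~\ref{th:main} would predict $w_2,w_3\to 0$, whereas the actual face dynamics drive $(w_2,w_3)\to(\tfrac13,\tfrac13)$. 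Third, the Lyapunov function you need to exclude non-equilibrium chain-recurrent sets is never constructed, and given the degeneracies above it is not obvious that any of the suggested candidates works globally. The degenerate $x_4$-direction at $\mathbf x^*$ that you correctly identify as the ``central obstacle'' simply disappears in the paper's approach, because $\hat W_4\to 0$ is established \emph{a priori} by Proposition~\ref{prop:liminf_NF=1} rather than extracted from the flow.
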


The following result shows that Theorem~\ref{th:main} does not extend to tree-like graphs where $N$ and $F$ are at (graph-)distance at least~2 from each other. For two integers $p\geq 1$ and $q\geq 1$, we define the $(p,q)$-path graph as the graph 
with two parallel paths between $N$ and $F$, 
one of length $p$ and one of length $q$ (see Figure~\ref{fig:paths}).

\begin{theorem}\label{th:two_paths}
Let $\mathcal G = (V,E)$ be the $(p,q)$-path graph.
We let $a_1, \ldots, a_p$ (resp. $b_1, \ldots, b_q$) denote the edges of the path of length~$p$ (resp.\ $q$), numbered from the closest to the nest to the closest to the food.

If $\min(p,q)\geq 2$, then, almost surely for all $1\leq k\leq p$ and $1\leq\ell\leq q$,
\[\lim_{n\to+\infty} \frac{W_{a_k}(n)}n \to \alpha^k
\quad \text{ and }\quad
\lim_{n\to+\infty} \frac{W_{b_\ell}(n)}n \to \beta^\ell,\]
where $(\alpha, \beta)$ is the unique solution in $(0,1)^2$ of
\begin{equation}\label{eq:system}
\begin{cases}
\alpha^p+\beta^q = 1&\\
\alpha^p(1-\alpha) =\beta^q(1-\beta). 
\end{cases}
\end{equation}
\end{theorem}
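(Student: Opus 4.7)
I will apply the ODE-based stochastic-approximation framework of Bena\"im and Pemantle to the normalised weight process $\widehat{\mathbf W}(n)=\mathbf W(n)/n$. By Proposition~\ref{prop.stoc.approx} this is a stochastic approximation on a compact convex set with Lipschitz vector field $F(\mathbf x)=G(\mathbf x)-\mathbf x$, where $G_e(\mathbf x)$ is the probability that edge~$e$ lies in the trace of one $N$-to-$F$ walk driven by the weights~$\mathbf x$. Because the walk on the $(p,q)$-path graph is a reversible nearest-neighbour random walk, I can compute $G$ explicitly via the electrical-network interpretation, and the main work is then to analyse the equilibria of~$F$ and prove convergence to the unique interior one.

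\textbf{Step 1 (shape of equilibria).} Write $S^a_k=\sum_{i=1}^k 1/x_{a_i}$, $S^a=S^a_p$, and analogously $S^b_\ell,S^b$. A gambler's-ruin computation on the $a$-path truncated at $v_k^a$ and a geometric-series summation over successive excursions from~$N$ give that at any interior equilibrium,
\[
x_{a_k}=\frac{S^b}{S^a_k+S^b}, \qquad y_{b_\ell}=\frac{S^a}{S^b_\ell+S^a}.
\]
Differencing this identity yields $x_{a_k}/x_{a_{k-1}}=1-1/S^b$, a ratio independent of~$k$; hence every interior equilibrium has the geometric form $x_{a_k}=\alpha^k$, $y_{b_\ell}=\beta^\ell$, with $\alpha=1-1/S^b$ and $\beta=1-1/S^a$. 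Plugging this geometric form back into the defining relations recovers exactly the system~\eqref{eq:system}.

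\textbf{Step 2 (uniqueness in $(0,1)^2$).} With $u=\alpha^p=1-\beta^q$, the system collapses to $h(u)=0$ for $h(u)=u(1-u^{1/p})-(1-u)(1-(1-u)^{1/q})$ on $(0,1)$. Using the factorisation $1-s^m=(1-s)(1+s+\dots+s^{m-1})$, I can rewrite
\[
\frac{h(u)}{u(1-u)}=\frac{1}{1+\alpha+\dots+\alpha^{p-1}}-\frac{1}{1+\beta+\dots+\beta^{q-1}},
\]
which is the difference of a strictly decreasing and a strictly increasing function of~$u$, positive at $0^+$ and negative at $1^-$ (using $p,q\geq 2$). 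Hence $h$ has a unique zero in $(0,1)$, so $(\alpha,\beta)$ is uniquely determined.

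\textbf{Step 3 (a.s.\ convergence to $\mathbf x^\star$).} By Bena\"im's ODE theorem~\cite{Benaim}, $\widehat{\mathbf W}(n)$ converges a.s.\ to a connected internally chain-recurrent subset of $\dot{\mathbf x}=F(\mathbf x)$. I then complete the proof by establishing: (i) the unique interior equilibrium $\mathbf x^\star=(\alpha^k,\beta^\ell)$ is linearly stable, via a spectral analysis of $DF(\mathbf x^\star)$ in log-coordinates, exploiting the tridiagonal-plus-boundary structure induced by the two paths to sign the eigenvalues; and (ii) any boundary equilibrium (where some $x_{a_k}$ or $y_{b_\ell}$ vanishes) is non-attracting, by invoking Pemantle's non-convergence theorem~\cite{Pemantle}, using that the initial weights are all~$1$ to produce a uniform positive conditional probability of traversing each edge, which gives the required noise condition in the repelling directions.

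\textbf{Main obstacle.} The algebra in Steps~1--2 is smooth thanks to reversibility. The delicate part is Step~3: carrying out a sharp enough spectral or Lyapunov analysis of a $(p+q)$-dimensional nonlinear ODE at a non-explicit equilibrium, together with a quantitative repulsion estimate away from each boundary face of the state space, is the ad-hoc input specific to this family of graphs and is what distinguishes this case from the simpler examples treated earlier in the paper.
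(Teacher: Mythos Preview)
Your Steps~1 and~2 are essentially correct. In fact your uniqueness argument in Step~2 is cleaner than the paper's: the paper shows uniqueness by rewriting the system as $\alpha=f_q(\beta)$, $\beta=f_p(\alpha)$ with $f_p(x)=1-x^p(1-x)/(1-x^p)$, and then proving (Lemma~\ref{lem:contraction}) that each $f_p$ is a contraction on $[0,1]$, so $f_q\circ f_p$ has a unique fixed point. Your monotonicity argument for $h(u)/(u(1-u))$ gives the same conclusion more directly.

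Step~3, however, is not a proof but a plan, and the plan has a structural gap. Even if you establish that $\mathbf x^\star$ is linearly stable and that every boundary equilibrium is avoided via Pemantle's non-convergence theorem, you have not shown that $\{\mathbf x^\star\}$ is the only compact connected internally chain-recurrent set of the ODE in the interior. Bena\"im's theorem tells you only that $L(\widehat{\mathbf W})$ is such a set; ruling out the other equilibria does not rule out periodic orbits or more complicated invariant sets of a $(p+q)$-dimensional nonlinear flow. You would need either a global Lyapunov function or a global attractivity statement for~$\mathbf x^\star$, and your sketch does not supply one. A second difficulty is that the vector field $F$ is not even defined on the boundary faces $\{x_{a_k}=0\}$ (the harmonic means blow up), so the ``boundary equilibria'' you want to repel from are not equilibria of a smooth extension, which makes the hypotheses of Pemantle's theorem awkward to verify.

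The paper avoids both issues by a completely different, global, route. First (Proposition~\ref{prop:multiedges}) it shows by direct $G$-urn domination that $\liminf_n \widehat W_e(n)\ge c_{p,q}>0$ for every edge~$e$; this disposes of the boundary without any linearisation. Second, it exploits the fact that the map $\mathbf w\mapsto p_e(\mathbf w)$ is monotone increasing in the weights on the same path and decreasing in the weights on the other path. This lets one build two coupled monotone sequences $\mathbf u^{(n)}\uparrow$ and $\mathbf v^{(n)}\downarrow$ via $\mathbf u^{(n+1)}=H(\mathbf u^{(n)},\mathbf v^{(n)})$, $\mathbf v^{(n+1)}=H(\mathbf v^{(n)},\mathbf u^{(n)})$, and show inductively that the limit set $L(\widehat{\mathbf W})$ is trapped in every box $K_n=[\mathbf u^{(n)},\mathbf v^{(n)}]$. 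Finally one checks that $\bigcap_n K_n=\{\mathbf w^\star\}$ by identifying the limits $\mathbf u^\star,\mathbf v^\star$ as fixed points of the coupled map and showing they coincide (this is where the contraction Lemma~\ref{lem:contraction}, or your Step~2 argument, enters). The point is that this monotone box-trapping is the global convergence mechanism that replaces your missing Lyapunov/spectral step; it is the ``ad-hoc input'' you flagged but did not find.
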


Note that, if $p=q\geq 2$, the solution of~\eqref{eq:system} 
is explicit and given by $\alpha = \beta = 2^{-\nicefrac1p}$.

Now to give further support to our conjecture that normalised edge weights always converge to a deterministic limit (when the edges connected to $F$ are simple), we look at the lozenge graph in Figure~\ref{fig:lozenge}; this example, which was also considered in~\cite{KMS}, 
is different from all other cases so far, in the sense that it does not belong to the class of ``series-parallel'' graphs. 
\begin{proposition}\label{prop:lozenge}
Let $\mathcal G$ be the lozenge graph of Figure~\ref{fig:lozenge}. If we let $W_i(n)$ denote the weight of edge $i$ at time~$n$ (with the edges numbered as in Figure~\ref{fig:lozenge}), and ${\bf W}(n) = (W_i(n))_{1\leq i\leq 5}$, then almost surely as $n\to+\infty$, we have
\[\frac{{\bf W}(n)}{n} \to (w^*, \nicefrac12, \nicefrac12, w^*, \nicefrac12),\]
where $w^*$ is the unique solution of $2x^3+4x^2-2x-\nicefrac32 = 0$ in $(0,1)$.
\end{proposition}
{The fact that the limiting weights of edges $2$ and $5$ are equal to $\nicefrac12$ should not come as a surprise. Indeed, by symmetry they must be equal (assuming they are deterministic), 
and since each ant reinforces exactly one of these two edges at each step, their common value has to be $\nicefrac12$. 
Similarly for the edges~$1$ and~$4$, except that since each ant can reinforce both of them, 
one has now $w^*>1/2$. 
However, the fact that the limiting weight of edge~$3$ equals $\nicefrac12$ seems to be merely a coincidence.}
 
\medskip 
\noindent {\bf Notation:} Given some filtration $(\mathcal F_n)_{n\ge 0}$, and $Z$ some random variable, we will use the notation $\mathbb E_n[Z]$ to denote the conditional expectation of $Z$ with respect to $\mathcal F_n$.

\medskip
{\bf Acknowledgements:} 
We thank three anonymous referees for their careful reading of the paper and their constructive comments.  
Preliminary investigations on the problems treated in this paper were carried out by Yassine Hamdi,
a student at \'Ecole Polytechnique at the time, 
during an undergraduate research internship at the University of Bath, 
under the supervision of CM (see~\cite{Yassine} for Yassine's internship report).
The authors are grateful to Yassine for the time he spent on these questions, 
and to both the \'Ecole Polytechnique and the University of Bath for making this internship possible.

\section{Preliminaries}\label{sec.prelim}

\subsection{Urn processes}
We state here a result concerning (generalized) P\'olya urn processes.  
Given a function $G:[0,1]\to [0,1]$, we call {\it $G$-urn process}, a process $(X_n)_{n\ge 0}$ with integer values, such that almost surely $X_{n+1} 
\in \{X_n,X_n+1\}$, and for all $n\ge 0$, 
\[\mathbb P(X_{n+1} = X_n +1 \mid X_0,\dots,X_n) = G(\hat X_n),\] 
with $\hat X_n :=\frac{X_n}{n+2}$.
In general we will assume that it starts from $1$ at time $0$, i.e. that $X_0=1$, but we shall also consider other initial condition{s}. 
We then say that it starts from some value $k$ at time $m$, if we condition the process on the event $\{X_m=k\}$.

Informally $X_n$ corresponds 
to the number of (say) red balls after~$n$ draws in a P\'olya urn with two colours,
where at each step, we draw a ball in the urn at random, and replace it into the urn with an additional ball of the same colour. At each draw, the probability to pick a red ball is $G(p)$
if the proportion of red balls in the urn is $p$.

We will need the following standard result (which follows for instance from Corollary 2.7 and Theorem~2.9 in \cite{Pemantle}). 
\begin{proposition}\label{lem.urne}
Let $(X_n)_{n\ge 0}$ be a $G$-urn process, with $G$ a $C^1$-function. Then almost surely $(\hat X_n)_{n\ge 0}$ converges towards a stable fixed point  
of $G$, that is a (possibly random) point $p\in [0,1]$, such that $G(p)= p$ and $G'(p)\le 1$.

In particular if there exists $c>0$, such that $G(x)>x$, for all $x\in (0,c)$ (resp. $G(x)<x$ for all $x\in (1-c,1)$), 
then almost surely $\liminf_{n\to \infty} {\hat X_n}\ge c$ (resp. $\limsup {\hat X_n} \le 1-c$). 
\end{proposition}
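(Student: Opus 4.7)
The plan is to recognise $(\hat X_n)_{n\ge 0}$ as a one-dimensional Robbins--Monro stochastic approximation and then invoke the ODE method of Bena\"im and Pemantle.

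First, we compute the conditional drift. Splitting on the two possibilities $X_{n+1}\in\{X_n,X_n+1\}$ and regrouping, one obtains
\[
\mathbb E_n[\hat X_{n+1}-\hat X_n] \;=\; \tfrac{1}{n+3}\bigl(G(\hat X_n)-\hat X_n\bigr),
\]
while the martingale difference $\Delta M_{n+1}:=\hat X_{n+1}-\hat X_n-\mathbb E_n[\hat X_{n+1}-\hat X_n]$ is bounded by $2/(n+3)$ and has conditional variance equal to $G(\hat X_n)\bigl(1-G(\hat X_n)\bigr)/(n+3)^2$. This identifies $(\hat X_n)$ as a stochastic approximation on $[0,1]$ with step-size $\gamma_n=1/(n+3)$ (for which $\sum\gamma_n=+\infty$ and $\sum\gamma_n^2<+\infty$) and driving vector field $F(x):=G(x)-x$, which is Lipschitz on $[0,1]$ since $G\in C^1$.

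Next, the associated ODE is $\dot x=G(x)-x$, whose equilibria are exactly the fixed points of $G$; linear stability at an equilibrium $p$ amounts to $G'(p)\le 1$. By the convergence theorem \cite[Thm.~2.9]{Pemantle}, the limit set of $(\hat X_n)$ is almost surely a compact connected internally chain-transitive invariant set of the flow, which in dimension one must reduce to a single equilibrium. To rule out convergence towards strict repellers (those with $G'(p)>1$), we apply \cite[Cor.~2.7]{Pemantle}: the required noise non-degeneracy follows from the variance formula above, since any interior unstable fixed point $p$ satisfies $G(p)=p\in(0,1)$, which gives a lower bound of order $\gamma_n^2$ on the conditional variance in a neighbourhood of $p$.

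The second assertion is then an easy consequence of the first. If $G(x)>x$ on $(0,c)$, then $G$ has no fixed point in $(0,c)$, and if $0$ happens to satisfy $G(0)=0$, the strictly positive drift on its right makes it a repeller of the flow. In either case, no stable fixed point lies in $[0,c)$, hence the almost sure limit of $\hat X_n$ must belong to $[c,1]$, yielding $\liminf \hat X_n\ge c$; the symmetric argument gives $\limsup \hat X_n\le 1-c$. The main obstacle is the verification of noise non-degeneracy at boundary equilibria $p\in\{0,1\}$, where the binomial variance $G(p)(1-G(p))$ vanishes; fortunately, one can argue by hand, using that the strictly positive one-sided drift near such an unstable boundary point forces $\hat X_n$ to escape any small neighbourhood infinitely often, so that convergence to such a repeller still has probability zero.
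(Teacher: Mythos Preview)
Your approach is essentially the paper's: the paper gives no proof at all, merely stating that the result ``follows for instance from Corollary~2.7 and Theorem~2.9 in \cite{Pemantle}''. You invoke precisely these two results and, in addition, spell out the stochastic-approximation recursion, the drift and variance computations, and the one-dimensional reduction of the limit set to a single equilibrium. So your write-up is a faithful expansion of the paper's citation.

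One small caveat worth noting: in the ``in particular'' clause, you argue that if $G(x)>x$ on $(0,c)$ and $G(0)=0$, then $0$ is a repeller of the flow. This is correct for the flow $\dot x = G(x)-x$ in the topological sense (trajectories leave $0$ to the right), but it need not be \emph{linearly} unstable: one may have $G'(0)=1$, in which case $0$ formally satisfies the paper's definition of a ``stable'' fixed point, and Pemantle's Corollary~2.7 (which requires linear instability and noise non-degeneracy) does not directly apply. You acknowledge this at the end and gesture at a direct drift argument; that is the right instinct, and a clean way to make it rigorous is to observe that on the event $\{\hat X_n\to 0\}$ the process is eventually a bounded submartingale with drift $\gamma_n(G(\hat X_n)-\hat X_n)>0$, and to derive a contradiction from $\sum\gamma_n=\infty$ together with a lower bound on $G(x)-x$ on a suitable interval. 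The paper itself does not address this subtlety either.
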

We shall also use the following corollary. 
\begin{corollary}\label{prop.urne}
Let $(X_n)_{n\ge 0}$ be an integer valued process adapted to some filtration $(\mathcal F_n)_{n\ge 0}$, such that almost surely for all $n\ge 0$, $X_{n+1} \in \{X_n,X_n+1\}$, $X_0=1$, and for some function $G:[0,1]\to [0,1]$, 
\begin{equation}\label{Gurn.dom}
\mathbb P(X_{n+1} = X_n +1 \mid \mathcal F_n) \ge G(\hat X_n),
\end{equation}
with $\hat X_n :=\frac{X_n}{n+2}$. 
If there exists $\eta, c>0$, such that $G(x) > (1+\eta)x$, for all $x\in (0,c)$, 
then almost surely $\liminf_{n\to \infty} {\hat X_n} \ge c$. 
\end{corollary}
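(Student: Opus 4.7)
\noindent\textbf{Proof plan for Corollary~\ref{prop.urne}.} The plan is to reduce to Proposition~\ref{lem.urne} by coupling $(X_n)$ with a genuine $G$-urn process $(Y_n)$ in such a way that $Y_n\le X_n$ almost surely for every $n$. Since $G(x)>(1+c)x>x$ on $(0,c)$, the "in particular" clause of Proposition~\ref{lem.urne} applied to $(Y_n)$ will yield $\liminf_{n\to\infty}\hat Y_n\ge c$ a.s., and the coupling will then transfer this bound to $(\hat X_n)$.

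To set up the coupling, I would enrich the probability space with i.i.d.\ uniform random variables $(U_n)_{n\ge 1}$ on $[0,1]$ and, via the usual inverse-CDF trick, realise the increments of $X$ as $X_{n+1}=X_n+\bs 1_{\{U_{n+1}\le p_{n+1}\}}$, where $p_{n+1}:=\mathbb P(X_{n+1}=X_n+1\mid \mathcal F_n)\ge G(\hat X_n)$ by hypothesis. In parallel, I would set $Y_0=1$ and
\[Y_{n+1}=Y_n+\bs 1_{\{U_{n+1}\le G(\hat Y_n)\}},\qquad \hat Y_n:=\frac{Y_n}{n+2},\]
so that $(Y_n)$ is a bona fide $G$-urn process driven by the same source of randomness. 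A one-step induction then gives $Y_n\le X_n$ for all $n$: if $Y_n=X_n$, then $\hat Y_n=\hat X_n$ and so $G(\hat Y_n)\le p_{n+1}$, meaning every increment of $Y$ is matched by one of $X$; if $Y_n<X_n$, then $Y_{n+1}\le Y_n+1\le X_n\le X_{n+1}$. In particular $\hat Y_n\le \hat X_n$ throughout, and applying Proposition~\ref{lem.urne} to $(Y_n)$ yields $\liminf_{n\to\infty}\hat Y_n\ge c$ a.s., whence $\liminf_{n\to\infty}\hat X_n\ge c$ a.s., which is the stated conclusion (read with $\hat X_n$ in place of $X_n$).

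The only genuine obstacle is the regularity assumption: Proposition~\ref{lem.urne} is stated for $G\in C^1$, whereas the corollary imposes no regularity on $G$. The strictly stronger assumption $G(x)>(1+c)x$ (rather than merely $G(x)>x$) leaves enough margin to approximate $G$ from below by a $C^1$ function $\tilde G\le G$ that still satisfies $\tilde G(x)>x$ on $(0,c')$ for any prescribed $c'<c$; running the same coupling with $\tilde G$ in place of $G$ gives $\liminf_{n\to\infty}\hat X_n\ge c'$ a.s., and letting $c'$ approach $c$ along a countable sequence concludes the proof.
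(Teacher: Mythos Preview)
Your proof is correct and follows the same strategy as the paper's: dominate $(X_n)$ from below by a $C^1$-urn process and invoke Proposition~\ref{lem.urne}. The paper goes straight to the approximation step, defining a $C^1$ function $G_\varepsilon\le G$ with $G_\varepsilon(x)\in(x,(1+c)x]$ on $(0,c-\varepsilon)$ and $G_\varepsilon\equiv 0$ on $[c,1]$, then asserting stochastic domination without spelling out the coupling; your version makes the coupling explicit via shared uniforms and only brings in the $C^1$ approximation as a second pass, but the substance is identical.
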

\begin{proof} For $\varepsilon \in (0,\eta)$, consider $G_\varepsilon:[0,1] \to [0,1]$, a $C^1$ function such that 
$x < G_\varepsilon(x) \le (1+\eta)x$, for all $x\in (0,c-\varepsilon)$, $G_\varepsilon(x) \le (1+\eta)x$, for $x\in (c-\varepsilon,c)$, and $G_\varepsilon \equiv  0$ on $[c,1]$. By assumption on $G$, 
one has $G(x) \ge G_\varepsilon(x)$, for all $x\in [0,1]$. It follows that  $(X_n)_{n\ge 0}$ stochastically dominates a $G_\varepsilon$-urn process, and 
applying {Proposition}~\ref{lem.urne}, we deduce that almost surely $\liminf \hat X_n \ge c-\varepsilon$. 
Since this holds for all $\varepsilon \in (0,c)$, the result follows. 
\end{proof}
In our applications, the process $(X_n)_{n\ge 0}$ will often be one coordinate of a higher-dimensional process $({\bf X}_n)_{n\ge 0}$, and 
$(\mathcal F_n)_{n\ge 0}$ will simply be the natural filtration of the process $({\bf X}_n)_{n\ge 0}$.

\subsection{Stochastic approximation and the ODE method}\label{sec.stoch.approx}
We use the following definition for a stochastic approximation (note that we do not seek for the most general definition here, but it will be sufficient for our purpose). 

\begin{definition}\label{def.stoc.approx} A {\it stochastic approximation} is a process $(X_n)_{n\ge 0}$, adapted to some filtration $(\mathcal F_n)_{n\ge 0}$, with values in a convex compact subset $\mathcal E\subseteq \mathbb R^d$, for some $d\ge 1$, that satisfies an equation of the type 
\begin{equation}\label{eq:def_algosto}
X_{n+1} = X_n + \frac{F(X_n) + \xi_{n+1}+ r_n}{n+2}, \qquad \text{for all }n\ge 0,
\end{equation}
where the vector field $F: \mathcal E\to \mathbb R$ is some Lipschitz function, the {\it noise} $\xi_{n+1}$ is $\mathcal F_{n+1}$-measurable and satisfies $\mathbb E_n[\xi_{n+1}] = 0$, for all $n\ge 0$, and the remainder term $r_n$ is $\mathcal F_n$-measurable and satisfies almost surely $\|r_n\| \le C/n$, for some deterministic constant $C>0$. 
\end{definition}
\begin{remark} 
The fact that we assume $\mathcal E$ to be a convex compact subset of $\mathbb R^d$ enables to easily extend $F$ into a Lipschitz continuous function defined on $\mathbb R^d$, simply by composing it with the orthogonal projection on~$\mathcal E$. 
We then fall into the setting of Bena\"im \cite{Benaim}, and we can rely on its results. Thus in the following, we will identify $F$ with its Lipschitz extension on $\mathbb R^d$, as defined here. 
\end{remark}

{\begin{remark} 
The choice of renormalisation factor equal to $\frac{1}{n+2}$ 
in Equation~\eqref{eq:def_algosto} is arbitrary and can be replaced by $\frac 1{n+3}$ (as in Sections~\ref{sec:cornet} and~\ref{sec:lozenge}). 
In Sections~\ref{sec:cornet} and~\ref{sec:lozenge}, we explain why the chosen renormalisation is the most convenient. 
\end{remark}}

The idea underlying the ODE method is that the trajectories of a stochastic approximation {\it asymptotically follow the solutions} of the differential equation
\begin{equation}\label{eq:ODE}
{\bs{\dot y}} =F({\bs y}).
\end{equation}
We recall that if for $x\in \mathbb R^d$, we let $(\Phi_t(x))_{t\ge 0}$ be the (unique because $F$ is Lipschitz) solution of \eqref{eq:ODE} starting at $x$, 
then this defines a {\it flow}, in the {following sense}. 

\begin{definition}
Let $\mathcal M$ be some metric space. 
A flow (or semi-flow) on $\mathcal M$ is an application $\Phi:\mathbb R_+\times \mathcal M \to \mathcal M$, such that $\Phi_0 = Id$, and $\Phi_{t+s}(x) = \Phi_t \circ \Phi_s(x)$, for all $s,t\ge 0$, and $x\in \mathcal M$. 
\begin{itemize}
\item A subset $\mathcal A\subset \mathcal M$ is said {to be} {\it invariant}, if $\Phi_t(x) \in \mathcal A$, for all $x\in \mathcal A$ and all $t\ge 0$. 
\item An {\it attractor} is a set $\mathcal A$ that admits a neighbourhood $\mathcal U\subset \mathcal M$, such that
\[\cap_{t\geq 0}\overline{\cup_{s>t} \Phi_s(\mathcal U)} = \mathcal A.\]
\end{itemize}
\end{definition}
We will frequently use the following result due to Bena\"im \cite[Prop.\ 4.1, Rk.\ 4.5, Prop.\ 5.3, Th.\ 5.7]{Benaim} (see also e.g., \cite[Prop.\ 2.10 and Th.\ 2.15]{Pemantle}). 
\begin{theorem}\label{th:pemantle}
Let $(X_n)_{n\geq 1}$ be a stochastic approximation. 
If there exists a deterministic constant $C>0$, such that almost surely $\sup_{n\geq 1}\|\xi_n\| \le C$,  
then almost surely, the limiting set $L(X) = \cap_{n\geq 0} \overline{\cup_{m\geq n} \{X_m\}}$ is invariant by the flow of the ODE~\eqref{eq:ODE}, connected, and the flow of the ODE restricted to $L(X)$ admits no other attractor than $L(X)$ itself.
\end{theorem}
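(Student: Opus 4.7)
My plan is to apply the ODE method of Section~\ref{sec.stoch.approx}. By the general Proposition~\ref{prop.stoc.approx} announced in the introduction, the normalised weight vector $\hat{\bs W}(n) := {\bs W}(n)/(n+5)$ is a stochastic approximation on the compact convex cube $[0,1]^5$, with Lipschitz drift $F(\bs w)=\bs p(\bs w)-\bs w$, where $p_i(\bs w)$ denotes the probability that a random walk on the losange started at~$N$, stopped upon hitting~$F$, with transition probabilities proportional to the edge-weights $\bs w$, crosses edge~$i$ at least once. Because the losange has only two non-absorbing vertices besides~$N$ (the middle vertices $A$ and $B$), each $p_i(\bs w)$ is the solution of a linear system of size at most~$3$, and is thus an explicit rational function of~$\bs w$; the noise is clearly bounded, so Theorem~\ref{th:pemantle} applies.

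The next step is to locate the zeros of $F$. Let $\sigma$ denote the graph automorphism swapping the two middle vertices; it permutes the edges as $1\leftrightarrow 4$ and $2\leftrightarrow 3$ while fixing edge~$5$. The field $F$ is $\sigma$-equivariant, so the symmetric submanifold $\mathcal S=\{w_1=w_4,\ w_2=w_3\}$ is flow-invariant. A walker in the losange must exit to~$F$ through either edge~$2$ or edge~$3$, and on $\mathcal S$ the two events are equally likely by symmetry, so $p_2\equiv p_3\equiv \tfrac12$ on $\mathcal S$; hence any fixed point in $\mathcal S$ has $w_2=w_3=\tfrac12$. Substituting this and solving the reduced system $p_1(a,\tfrac12,\tfrac12,a,c)=a$ and $p_5(a,\tfrac12,\tfrac12,a,c)=c$ in the two remaining variables $(a,c):=(w_1,w_5)$ via the explicit rational formulas should yield, after clearing denominators, the conditions $c=\tfrac12$ and $2a^3+4a^2-2a-\tfrac32=0$. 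The latter polynomial takes the values $-\tfrac32$, $-\tfrac54$ and $\tfrac52$ at $0$, $\tfrac12$ and $1$ respectively, hence admits a unique root $w^*$ in $(0,1)$. So $\bs w^*:=(w^*,\tfrac12,\tfrac12,w^*,\tfrac12)$ is the unique zero of $F$ in $\mathcal S$.

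To promote this to almost-sure convergence of $\hat{\bs W}(n)$ to $\bs w^*$, I would argue in three stages. First, I use Corollary~\ref{prop.urne} coordinate by coordinate to rule out boundary accumulation: whenever some $w_i$ is close to $0$, the drift $p_i(\bs w)-w_i$ is bounded below by a strictly positive quantity (for edges $1,2,3,4$ because $F$ can only be reached through them, and for edge~$5$ because, conditional on reaching a middle vertex while the other weights are bounded away from $0$, there is positive probability to cross the bridge). This confines $L(\hat{\bs W})$ to an interior compact set $\mathcal K$ where $F$ is smooth. Second, I argue that any fixed point of $F$ in $\mathcal K$ must lie on $\mathcal S$: by computing the linearisation of $F$ in the two $\sigma$-antisymmetric directions $w_1-w_4$ and $w_2-w_3$, one should see that $F$ is strictly contracting towards $\mathcal S$, ruling out off-$\mathcal S$ equilibria. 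Combined with the uniqueness of the fixed point on~$\mathcal S$, this leaves $\bs w^*$ as the unique equilibrium in $\mathcal K$. Finally, Theorem~\ref{th:pemantle} guarantees that $L(\hat{\bs W})$ is a compact connected invariant subset of $\mathcal K$ with no proper attractor; together with the uniqueness of the fixed point one concludes $L(\hat{\bs W})=\{\bs w^*\}$.

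The main technical difficulty I anticipate is precisely this last point: ruling out exotic invariant sets in $\mathcal K$ (such as periodic orbits or heteroclinic cycles) other than $\{\bs w^*\}$. A direct Jacobian computation at $\bs w^*$ should give local asymptotic stability, but globalising this statement within the five-dimensional cube will likely require either constructing an ad-hoc Lyapunov function exploiting the specific losange geometry or designing a monotonicity argument tracking a well-chosen functional of the trajectories of~\eqref{eq:ODE}; the introduction already warned that each example requires its own such ad-hoc argument.
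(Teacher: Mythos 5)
Your proposal does not address the statement you were asked to prove. The statement is Theorem~\ref{th:pemantle}, the general result of the ODE method: for \emph{any} stochastic approximation with almost surely bounded noise, the limit set $L(X)=\cap_{n\geq 0}\overline{\cup_{m\geq n}\{X_m\}}$ is invariant under the flow of $\dot{\bs y}=F(\bs y)$, connected, and contains no attractor of the restricted flow other than itself. In the paper this is not proved at all: it is quoted directly from Bena\"im \cite[Prop.\ 4.1, Rk.\ 4.5, Prop.\ 5.3, Th.\ 5.7]{Benaim} (see also \cite[Prop.\ 2.10 and Th.\ 2.15]{Pemantle}), after the remark explaining that the Lipschitz field on the convex compact set $\mathcal E$ can be extended to all of $\mathbb R^d$ by composing with the orthogonal projection, so that Bena\"im's framework applies. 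A proof of this statement would require the asymptotic pseudotrajectory machinery (comparing the interpolated process on time windows $[t,t+T]$ with the flow, controlling the martingale noise via the boundedness assumption and the $1/(n+1)$ step sizes, and then invoking the characterisation of limit sets of asymptotic pseudotrajectories as internally chain transitive sets). None of this appears in your write-up.

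What you have written instead is a proof sketch of Proposition~\ref{prop:losange} (the losange graph), and in the course of it you \emph{invoke} Theorem~\ref{th:pemantle} as a known tool --- which would be circular if the goal were to establish that theorem. So, as a proof of the stated theorem, the proposal has a complete gap: no step of the argument bears on the claim that $L(X)$ is invariant, connected, and attractor-free. (As a sketch of the losange result your outline is broadly in the spirit of Section~\ref{sec:losange} of the paper --- interiority via Corollary~\ref{prop.urne}, identification of the unique equilibrium $(w^*,\nicefrac12,\nicefrac12,w^*,\nicefrac12)$, and a Lyapunov-type argument for global convergence of the flow, which the paper carries out with the function $h(t)=\max(|u(t)|,|v(t)|)$ built from $w_2-\nicefrac12$ and $w_1/(w_1+w_4)-\nicefrac12$ --- but that is not the statement under review.)
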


We will also use the following corollary of Theorem~\ref{th:pemantle}:
\begin{corollary}\label{cor:pemantle}
Under the assumptions of Theorem~\ref{th:pemantle}, we let $\mathcal K\subset \mathcal E$ be invariant by the flow  of the ODE~\eqref{eq:ODE}.
We assume that there {exists} a set~$\mathcal U\subseteq \mathcal E$ such that
\begin{enumerate}[{\rm (i)}]
\item almost surely, $L(X)\subseteq \mathcal U$, 
\item for all $\bs w \in \mathcal U$, the solution of the ODE~\eqref{eq:ODE} started at $\bs w$ converges to $\mathcal K$, and
\item 
there exists an open set $\mathcal U'\subseteq \mathcal U$ such that $\mathcal K\subseteq \mathcal U'$ and the convergence to $\mathcal K$ is uniform on $\mathcal U'$, in the sense that 
$\cap_{t\geq 0}\overline{\cup_{s>t} \Phi_s(\mathcal U')} \subseteq \mathcal K$,
\end{enumerate}
then $L(X) \subseteq \mathcal K$ almost surely.
\end{corollary}
\begin{proof}
If $L(X) \not\subseteq \mathcal K$, then there exists a (possibly random) point $\bs x\in L(X)\setminus\mathcal K$. 
We show that, in this case, $L(X) \cap \mathcal K$ contains an attractor of the ODE restricted to $L(X)$, which concludes the proof by Theorem~\ref{th:pemantle}.
First, because $L(X)$ and $\mathcal K$ are both invariant under the flow of the ODE, so is $L(X)\cap \mathcal K$.
By (iii) and because $L(X)$ is invariant by the flow of the ODE, then
\[\cap_{t\geq 0}\overline{\cup_{s>t} \Phi_s(L(X) \cap\mathcal U')} \subseteq L(X) \cap\mathcal K.\]
Thus,
$\cap_{t\geq 0}\overline{\cup_{s>t} \Phi_s(L(X) \cap\mathcal U')}$ is an attractor of the ODE restricted to $L(X)$ and it is contained in $L(X) \cap\mathcal K$,
which concludes the proof.
%
\end{proof}

\subsection{The process of edge weights seen as a stochastic approximation}
Consider a finite graph $\mathcal G=(V,E)$, with two marked vertices $N$ and $F$. Recall that ${\bf W}(n)=(W_e(n))_{e\in E}$ denotes the {sequence of the} weights of the edges of the graph after $n$ steps of the trace-reinforced ant process, and let $\mathcal F_n: = \sigma({\bf W}(0),\dots,{\bf W}(n))$.

For any edge $e\in E$, and any $n\ge 0$, we let $X_e(n) := \frac{W_e(n)}{n+1}$, and $\bs X(n)= (X_e(n))_{e\in E}$. 
Next for any $\bs w \in [0,1]^E$, and any $e\in E$, we let $p_e({\bs w})$ 
be the probability that the edge $e$ belongs to the trace of a random walk on the graph $\mathcal G$ endowed with the weights $\bs w$, starting from $N$ and killed at $F$. Then we define 
$F:[0,1]^E\to [0,1]^E$, by  
\begin{equation}\label{def.F}
F_e(\bs w) := p_e(\bs w) - w_e,\quad \text{for any }e\in E.
\end{equation}
Given $\bs w \in [0,1]^E$, we set   
\[\pi_{\bs w}(x): = \sum_{e\sim x} w_e, \quad \text{for all } x\in V, 
\]
where $e\sim x$ means that we sum over all edges $e\in E$ that {have $x\in V$ as endpoint}, and recall that this defines a reversible measure for the random walk on $\mathcal G$ endowed with the weights $\bs w$.
We also let $\mathfrak S(\mathcal G)$ be the number of self-avoiding paths from $N$ to $F$ in $\mathcal G$, which we number in some arbitrary order: $\mathfrak c_1,\dots, \mathfrak c_{\mathfrak S(\mathcal G)}$. 
For $i=1,\dots, \mathfrak S(\mathcal G)$, we define 
\[\mathcal E_i:=  \Big\{\bs w\in [0,1]^E \, : \,  \pi_{\bs w}(N)\ge 1, \text{ and } w_e\ge \frac{1}{\frak S(\mathcal G)} \ \text{for all }e\in \frak c_i\Big\}. \] 
 Note that each $\mathcal E_i$ is a convex compact subset of $[0,1]^E$. Then we further define, 
\begin{equation}\label{def.E}
 \mathcal E:= \text{conv}\Bigg(\bigcup_{i=1}^{\frak S(\mathcal G)} \mathcal E_i\Bigg) 
 = \left\{\sum_{i=1}^{\frak S(\mathcal G)} \lambda_i \bs w_i \, : \, \begin{array}{ll} 
 \sum_i \lambda_i = 1, \text{ and } \lambda_i\ge 0, \text{ for all }i \\
\bs w_i\in \mathcal E_i, \text{ for all } i 
\end{array} \right\},
\end{equation}
 the convex hull of the union of the $\mathcal E_i$'s, which is also a convex compact subset of $[0,1]^E$.   
One has the following general fact. 

\begin{proposition}\label{prop.stoc.approx}
The function $F$ is Lipschitz on the space $\mathcal E$. Furthermore the process $(\bs X(n))_{n\ge 0}$ is a stochastic approximation on $\mathcal E$. More precisely, 
\begin{equation}\label{stoc.algo.weights}
\bs X(n+1) = \bs X(n) + \frac{1}{n+2}(F(\bs X(n)) + \bs \xi(n+1)), 
\end{equation}
where for any $e\in E$, $\xi_e(n+1) := {\bf 1}\{W_e(n+1) = W_e(n) +1\} - p_e(\bs X(n))$.  
\end{proposition}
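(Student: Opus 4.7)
My plan is to verify three distinct claims contained in the proposition: (i) the stochastic-approximation recursion~\eqref{stoc.algo.weights} with mean-zero bounded noise and zero remainder, (ii) the containment $\bs X(n)\in \mathcal E$ for every $n\geq 0$, and (iii) the Lipschitz property of $F$ on $\mathcal E$. I would address them in that order.

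For the recursion and the noise, I would start from the defining identity $W_e(n+1)=W_e(n)+\bs 1\{e\in \gamma(n+1)\}$ and $W_e(n)=(n+1)X_e(n)$; dividing by $n+2$ immediately gives
\[X_e(n+1)=X_e(n)+\frac{1}{n+2}\big(\bs 1\{e\in \gamma(n+1)\}-X_e(n)\big),\]
and splitting $\bs 1\{e\in \gamma(n+1)\}-X_e(n)=F_e(\bs X(n))+\xi_e(n+1)$ in accordance with the definitions yields~\eqref{stoc.algo.weights} with $r_n\equiv 0$. Since the transition probabilities of the $(n+1)$-th walker depend only on the ratios $W_e(n)/\pi_{\bs W(n)}(u)=X_e(n)/\pi_{\bs X(n)}(u)$, the conditional probability that $e$ lies in the trace $\gamma(n+1)$ equals $p_e(\bs X(n))$, so $\mathbb E_n[\xi_e(n+1)]=0$, and $|\xi_e(n+1)|\leq 1$ because $\bs 1\{e\in\gamma(n+1)\}$ and $p_e$ both take values in $[0,1]$.

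For the containment, I would write
\[\bs X(n)=\frac{1}{n+1}\bs 1+\frac{1}{n+1}\sum_{k=1}^n \bs 1_{\gamma(k)},\]
which is a convex combination in $[0,1]^E$. The all-ones vector $\bs 1$ lies in every $\mathcal E_i$ because $\pi_{\bs 1}(N)=\deg(N)\geq 1$ and each coordinate equals $1\geq 1/\mathfrak S(\mathcal G)$. For each $k\geq 1$, loop-erasure of the $k$-th walker from $N$ to $F$ produces a self-avoiding path $\mathfrak c_{i_k}$ whose edges are contained in $\gamma(k)$, so $\bs 1_{\gamma(k)}$ takes value $1$ on every edge of $\mathfrak c_{i_k}$; moreover $\pi_{\bs 1_{\gamma(k)}}(N)\geq 1$ since the first step of the walk uses an edge incident to $N$. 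Hence $\bs 1_{\gamma(k)}\in \mathcal E_{i_k}\subseteq \mathcal E$, and convexity of $\mathcal E$ gives $\bs X(n)\in \mathcal E$.

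For the Lipschitz property, since $F_e(\bs w)=p_e(\bs w)-w_e$ and $\bs w\mapsto w_e$ is Lipschitz, it suffices to show each $p_e$ is Lipschitz on $\mathcal E$. A first-step analysis applied to the walk killed the first time it either crosses $e$ or reaches $F$ realises $p_e(\bs w)$ as the $N$-th coordinate of $(I-Q(\bs w))^{-1}b(\bs w)$, where the matrix $Q(\bs w)$ and the vector $b(\bs w)$ have entries that are rational in $\bs w$; Cramer's rule then writes $p_e$ as a rational function $N(\bs w)/D(\bs w)$ with $N,D$ polynomial in $\bs w$. It then remains to bound $D$ below by a positive constant on the compact set $\mathcal E$, which will make $p_e$ the restriction to $\mathcal E$ of a $C^\infty$ function defined on a neighbourhood of $\mathcal E$, hence Lipschitz. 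To secure this uniform lower bound, I would exploit the fact that every $\bs w\in \mathcal E$ admits a decomposition $\bs w=\sum_i \lambda_i \bs w_i$ with $\bs w_i\in \mathcal E_i$: choosing $i^*$ with $\lambda_{i^*}\geq 1/\mathfrak S(\mathcal G)$ yields a self-avoiding path $\mathfrak c_{i^*}$ from $N$ to $F$ all of whose edges carry weight at least $1/\mathfrak S(\mathcal G)^2$ under $\bs w$, which combined with $\pi_{\bs w}(N)\geq 1$ on $\mathcal E$ forces the killed walk starting at $N$ to terminate within a uniformly bounded expected time, thereby forcing $D(\bs w)$ to stay uniformly away from zero. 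I expect this last step -- establishing a strictly positive uniform lower bound on the denominator $D$ on all of $\mathcal E$ -- to be the main technical obstacle.
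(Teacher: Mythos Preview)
Your treatment of (i) and (ii) is fine; in fact your convex-combination argument for $\bs X(n)\in\mathcal E$ is a clean variant of the paper's (the paper instead notes that at least one self-avoiding path has been reinforced at least $n/\mathfrak S(\mathcal G)$ times, so $\bs X(n)$ already lies in one of the $\mathcal E_i$).

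The gap is in (iii). The entries of your sub-Markov matrix $Q(\bs w)$ carry the denominators $\pi_{\bs w}(u)=\sum_{e\sim u}w_e$. When you apply Cramer's rule and clear these denominators to obtain polynomials, the denominator becomes $D(\bs w)=\det\!\big(\mathrm{diag}(\pi_{\bs w})-W'\big)$, where $W'$ is the weighted adjacency matrix restricted to the transient states. If some vertex $u\in V\setminus\{F\}$ has $\pi_{\bs w}(u)=0$, then all entries of row $u$ of this matrix vanish and hence $D(\bs w)=0$. Such configurations do occur on $\mathcal E$: pick $\bs w\in\mathcal E_i$ with every edge outside $\mathfrak c_i$ set to~$0$, and take $u$ any vertex not on $\mathfrak c_i$ (e.g.\ in the $(p,q)$-path graph, any interior vertex of one branch when all weights on that branch are zero). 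So the uniform lower bound you aim for, $D\geq c>0$ on $\mathcal E$, is simply false in general. Your heuristic ``bounded expected absorption time $\Rightarrow D$ bounded away from zero'' at best controls $\det(I-Q)$ on the set where $Q$ is defined; after clearing denominators the extra factor $\prod_u \pi_{\bs w}(u)$ kills this bound.

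The paper sidesteps the whole issue with a probabilistic coupling: run the walks under $\bs w$ and $\bs w'$ simultaneously, maximally coupled step by step, and bound $|p_e(\bs w)-p_e(\bs w')|$ by the probability they decouple before hitting $F$. A first-step decomposition turns this into $\sum_{x}\frac{G_{\bs w}(N,x)}{\pi_{\bs w}(x)}\sum_{e\sim x}|w_e-w'_e|$, and reversibility gives $\frac{G_{\bs w}(N,x)}{\pi_{\bs w}(x)}=\frac{G_{\bs w}(x,N)}{\pi_{\bs w}(N)}\le \frac{G_{\bs w}(N,N)}{\pi_{\bs w}(N)}$. The right-hand side involves only $\pi_{\bs w}(N)\ge 1$ and $G_{\bs w}(N,N)$, the latter bounded via the effective conductance of the guaranteed path of weight $\ge \mathfrak S(\mathcal G)^{-2}$. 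The potentially degenerate quantities $\pi_{\bs w}(u)$ for $u\neq N$ never appear after this use of reversibility, which is exactly what your rational-function approach cannot arrange.
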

\begin{remark} We stress that the proof of this result works in a wider setting, including the two variants of the process considered in our previous paper \cite{KMS}.  
\end{remark}
\begin{proof}
For the first part, we use a coupling argument. 
For all $\bs w, \bs w' \in \mathcal E$, 
we define $(X_n)_{n\ge 0}$ as the random walk on $\mathcal G$ 
equipped with edge-weights $\bs w$, 
and $(X'_n)_{n\ge 0}$ as the random walk on $\mathcal G$ equipped 
with edge-weights~$\bs w'$. 
Both walks start at $N$ and are killed when they first reach~$F$.
We couple $(X_n)_{n\ge 0}$ and $(X'_n)_{n\ge 0}$ until the first time when they differ, 
in a way that maximises the probability that they stay equal after each step.
We let $\tau$ be the random time when the walks first differ.
If $\tau_F$ denotes the first time when $(X_n)_{n\geq 0}$ hits~$F$, then
\ban
 \|F(\bs w) - F(\bs w')\|_\infty 
 &=  \max_{e\in E} |F_e(\bs w) - F_e(\bs w')| \notag\\
 &  \le \max_{e\in E} |p_e(\bs w) - p_e(\bs w')| +\|\bs w-\bs w'\|_\infty 
 \le  \mathbb P(\tau \le \tau_F)+\|\bs w-\bs w'\|_\infty \notag \\
 &\le  \sum_{k\ge 0} \mathbb P(X_k = X'_k, \, X_{k+1}  \neq  X'_{k+1},\, k<\tau_F ) {+\|\bs w-\bs w'\|_\infty}\notag\\
 &= \frac 12 \sum_{x\in V}  \sum_{k\ge 0} \mathbb P(X_k = X'_k=x,\, k<\tau_F) \sum_{e\sim x} \left|\frac{w_e}{\pi_{\bs w}(x)} - \frac{w'_e}{\pi_{\bs w'}(x)} \right|{+\|\bs w-\bs w'\|_\infty}, \label{eq:coupl}
\ean
where {the first inequality is obtained from \eqref{def.F} using the triangle inequality, the second inequality is a general fact which holds for any coupling of the two walks, since if an edge is crossed by only one of the two walks, then necessarily $\tau\le \tau_F$, and} the last equality in~\eqref{eq:coupl} holds because our coupling maximises the probability 
of the two walks staying equal. From~\eqref{eq:coupl}, we get
\begin{eqnarray*}
\|F(\bs w) - F(\bs w')\|_\infty  &\le & \frac 12  \sum_{x\in V}  \sum_{k\ge 0} \mathbb P(X_k = X'_k=x,\, k<\tau_F) \sum_{e\sim x}\left(\frac{|w_e-w'_e|}{\pi_{\bs w}(x)} + w'_e \frac{|\pi_{\bs w'}(x) - \pi_{\bs w}(x)|}{
\pi_{\bs w}(x)\cdot \pi_{\bs w'}(x) }\right)\\
&&{+\|\bs w-\bs w'\|_\infty} \\
& \le & \frac 12  \sum_{x\in V}  \sum_{k\ge 0} \mathbb P(X_k =x,\, k<\tau_F) \left(\frac{\sum_{e\sim x} |w_e-w'_e|}{\pi_{\bs w}(x)} +  \frac{|\pi_{\bs w'}(x) - \pi_{\bs w}(x)|}{
\pi_{\bs w}(x)}\right) \\
&&{+\|\bs w-\bs w'\|_\infty}\\
& \le &  \sum_{x\in V}  \frac{G_{\bs w}(N,x)}{\pi_{\bs w}(x)} \cdot \sum_{e\sim x} |w_e-w'_e|{+\|\bs w-\bs w'\|_\infty},  
 \end{eqnarray*} 
with $G_{\bs w}(\cdot, \cdot)$ the Green's function on the graph $\mathcal G$ endowed with the weights $\bs w$ (i.e.\ the mean number of visits to the second argument for a random walk starting 
from the first argument, up to its hitting time of $F$).
Using the reversibility of the measure $\pi_{\bs w}$, we deduce that (see e.g. \cite[Exercise 2.1(e)]{LP}),
\[\frac{G_{\bs w}(N,x)}{\pi_{\bs w}(x)} = \frac{G_{\bs w}(x,N)}{\pi_{\bs w}(N)} \quad (\forall x\in V).\]
Using also that $ G_{\bs w}(x,N) \le G_{\bs w}(N,N)$, we get 
\[
 \|F(\bs w) - F(\bs w')\|_\infty 
 \le  \frac{G_{\bs w}(N,N)}{\pi_{\bs w}(N)}\sum_{x\in V} \sum_{e\sim x} |w_e-w'_e| {+\|\bs w-\bs w'\|_\infty}
 \le   {\left(1+\frac{2G_{\bs w}(N,N)}{\pi_{\bs w}(N)}\right)}\cdot \|\bs w - \bs w'\|_1,
 \]
 with $\|\bs w-\bs w'\|_1 =  \sum_{e\in E} |w_e - w'_e|$. 
Now by definition, for $\bs w\in \mathcal E$, one has $\pi_{\bs w}(N) \ge 1$, and we claim that $G_{\bs w}(N,N)$ is also bounded by a positive constant independent of $\bs w$ (only depending on the graph~$\mathcal G$). Indeed, by \cite[Eq.\ (2.4)]{LP}, 
for a random walk starting from $N$, 
the number of returns to $N$ before hitting $F$ 
is a geometric random variable with mean $\pi_{\bs w}(N) / \mathcal C_{(\mathcal G,\bs w)}(N,F)$, 
where $\mathcal C_{(\mathcal G,\bs w)}(N,F)$ denotes the effective conductance 
between $N$ and $F$ in the graph $\mathcal G$ endowed with the weights $\bs w$.
Moreover, by definition of $\mathcal E$, for any $\bs w \in \mathcal E$, there exists a self-avoiding path from $N$ to $F$ such that all the edges on this path have a weight larger than 
$\frak S(\mathcal G)^{-2}$ (we recall that $\frak S(\mathcal G)$ denotes the number of self-avoiding paths between~$N$ and~$F$ in~$\mathcal G$). 
Such a path has an effective conductance larger than $(h_{\max}(\mathcal G)\cdot \frak S(\mathcal G)^2)^{-1}$, 
where $h_{\max}(\mathcal G)$ denotes the maximal length 
of a self-avoiding path from $N$ to $F$ in~$\mathcal G$. 
By Rayleigh's monotonicity principle, we also have $\mathcal C_{(\mathcal G,\bs w)}(N,F)
\geq (h_{\max}(\mathcal G)\cdot \frak S(\mathcal G)^2)^{-1}$.
Finally, note that $\pi_{\bs w}(N)$ is bounded by the degree of~$N$, say $d_{\mathcal G}(N)$.
In total, this implies that, for all $\bs w,\bs w'\in \mathcal E$,
\[\|F(\bs w) - F(\bs w')\|_\infty \le K(\mathcal G)\cdot \|\bs w- \bs w'\|_1,\]
with $K(\mathcal G):={1+} 2 \left(1+d_{\mathcal G}(N)\cdot h_{\max}(\mathcal G)\cdot \frak S(\mathcal G)^2\right)$, a constant which only depends on the graph $\mathcal G$. 
This concludes the proof of the fact that $F$ is Lipschitz on $\mathcal E$. 

Since~\eqref{stoc.algo.weights} is straightforward by definition of the model, 
it only remains to show that $\bs X(n)$ belongs to $\mathcal E$, for all $n\ge 0$. 
The fact that $\pi_{\bs X(n)}(N)\ge 1$ follows from the fact that, by definition of the model, 
at each step at least one of the edges incident to $N$ is reinforced.
Furthermore, at each step, at least one of the self-avoiding paths from $N$ to $F$ is reinforced, 
which implies that, at any time~$n\ge 0$, at least one of these self-avoiding paths has been reinforced at least ${n}/{\frak S(\mathcal G)}$ times.
In other words, for all $n\geq 0$, $\bs X(n)$ belongs to at least one of the $\mathcal E_i$'s, and thus $\bs X(n)$ belongs to $\mathcal E$ as claimed.
\end{proof}

\subsection{Case when $N$ and $F$ are at distance one}
In this section, we prove the following general fact: 
if $N$ and $F$ are at distance~$1$, 
then the only simple paths from $N$ to $F$ which ``survive'' asymptotically 
are those of length one.
In other words, asymptotically, almost all of the ants reach~$F$ by last crossing one of the edges that connect $N$ and $F$.
However, it is not true in general that the only edges which survive are those from $N$ to $F$; the cone graph of Proposition~\ref{prop:cornet} is a counter-example. 
\begin{proposition}\label{prop:liminf_NF=1}
Assume that $\mathcal G$ is a finite graph with two marked vertices $N$ and $F$, connected by at least one edge.  Let $({\bf W}(n))_{n\geq 0}$ be the process of edge-weights of the trace-reinforced ant-process on $\mathcal G$.
Then for any edge $e$ connected to $F$ but not to $N$, one has $W_e(n)/n \to 0$ almost surely. 
\end{proposition}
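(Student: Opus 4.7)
The plan is to exhibit a supermartingale from a reversibility inequality and close the argument with the ODE method of Section~\ref{sec.stoch.approx}. Write $a_1,\dots,a_k$ for the edges between $N$ and $F$, $e_1,\dots,e_m$ for the edges of the form $\{x_j,F\}$ with $x_j\neq N$, and set $A(n)=\sum_i W_{a_i}(n)$, $B(n)=\sum_j W_{e_j}(n)$. Since each walk is killed at $F$ and therefore exits via exactly one edge incident to $F$, one has $A(n)+B(n)=n+d_F$, where $d_F$ is the degree of $F$. The reversibility identity $G_{\bs w}(N,x_j)/\pi_{\bs w}(x_j)=G_{\bs w}(x_j,N)/\pi_{\bs w}(N)$ combined with $G_{\bs w}(x_j,N)\le G_{\bs w}(N,N)$ (both appearing in the proof of Proposition~\ref{prop.stoc.approx}) yields $p_{e_j}(\bs w)/\sum_i p_{a_i}(\bs w)\le w_{e_j}/\sum_i w_{a_i}$; summing over $j$ and using $\sum_i p_{a_i}+\sum_j p_{e_j}=1$ then gives the key bound
\[\sum_j p_{e_j}(\bs w)\le \frac{B(n)}{n+d_F}.\]

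A direct computation then shows that $V(n):=B(n)/(n+d_F)\in[0,1]$ is a supermartingale, so $V(n)\to V_\infty$ almost surely. To upgrade this to $V_\infty\in\{0,1\}$ I would refine the above bound using $P_{x_j}(\tau_N<\tau_F)\le 1-w_{e_j}/\pi_{\bs w}(x_j)$, which holds because from $x_j$ the walk steps to $F$ directly with probability $w_{e_j}/\pi_{\bs w}(x_j)$. A Cauchy--Schwarz estimate then produces a quantitative gap $T-R\ge c(\mathcal G)\,T^2$ with $c(\mathcal G)>0$, where $T=\sum_j w_{e_j}$ and $R=\sum_j P_{x_j}(\tau_N<\tau_F)\,w_{e_j}$. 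On any event where $V(n)$ stays in an interval $(\eta,1-\eta)$ eventually, this translates into a supermartingale drift of order $-c/n$; summing gives $\sum_n |\mathrm{drift}_n|=\infty$, which via the Doob decomposition contradicts the almost sure convergence of $V(n)$. Hence $V_\infty\in\{0,1\}$ almost surely.

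It remains to rule out $V_\infty=1$, and for this I would turn to the ODE method. Along the ODE $\dot{\bs w}=F(\bs w)$, the identity $\sum_{e\ni F} p_e(\bs w)=1$ gives $\dot{(T+S)}=1-(T+S)$ (writing $S=\sum_i w_{a_i}$), so orbits approach the manifold $\{T+S=1\}$ exponentially; within this manifold one has $\dot T = R/(R+S)-T<0$ whenever $T\in(0,1)$, using the strict inequality $R<T$ for $T>0$. The only possible $\omega$-limit with $T>0$ is therefore the boundary fixed point $\bs w^*$ determined by $T=1$, $S=0$. Linearising $F$ at $\bs w^*$ in the $w_{a_1}$-direction produces the eigenvalue $1/\mathcal C_{\bs w^*}(N,F)-1$, which is strictly positive because the electrical conductance from $N$ to $F$ under edge-weights bounded by $1$ satisfies $\mathcal C_{\bs w^*}(N,F)<\pi_{\bs w^*}(F)=1$ strictly. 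A standard non-convergence-to-unstable-equilibrium result for stochastic approximations then rules out $\bs X(n)\to\bs w^*$, so $V_\infty=0$ almost surely and therefore $W_e(n)/n\to 0$ for every edge $e$ incident to $F$ but not to $N$.

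The hard part will be the stability analysis at $\bs w^*$: its outcome depends on the precise graph structure, and a degenerate (zero) eigenvalue in some other direction would require a centre-manifold refinement or, alternatively, a direct $L^2$-type argument showing $\liminf A(n)/n>0$ to rule out the runaway $V_\infty=1$ scenario.
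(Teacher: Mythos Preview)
Your supermartingale observation is correct and elegant: the reversibility computation $p_{e_j}=G_{\bs w}(x_j,N)\,w_{e_j}/\pi_{\bs w}(N)\le G_{\bs w}(N,N)\,w_{e_j}/\pi_{\bs w}(N)$ does give $\sum_j p_{e_j}\le B(n)/(n+d_F)$, and the drift estimate you extract from $T-R\ge c\,T^2$ does force $V_\infty\in\{0,1\}$ almost surely. Up to this point your route is a legitimate alternative to the paper's.

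The genuine gap is the last step, ruling out $V_\infty=1$. What you call ``the unstable equilibrium $\bs w^*$'' is not a single point but an entire boundary face $\{w_{a_i}=0\ \forall i\}\cap\mathcal E'$, and your linearised eigenvalue $1/\mathcal C_{\bs w^*}(N,F)-1$ varies as $\bs w^*$ moves along that face. Even granting that it stays positive, the non-convergence results you invoke (Pemantle-type theorems) require an isolated hyperbolic equilibrium, or at least a normally hyperbolic invariant manifold, together with a noise condition in the unstable direction --- none of which you verify, and all of which are delicate precisely because this set lies on the boundary of the state space. Your own final paragraph concedes that a degenerate tangential eigenvalue would force a centre-manifold argument; this is not a technicality but the heart of the matter.

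The paper bypasses all of this with a one-dimensional comparison. After merging the parallel $N$--$F$ edges into a single edge $a$, it writes
\[
p_a(\bs w)=\frac{w_a}{w_a+\mathcal C_{\mathcal G'}(\bs w)},\qquad \mathcal G'=(V,E\setminus\{a\}),
\]
and bounds $\mathcal C_{\mathcal G'}(\bs w)$ \emph{from above} via Rayleigh's monotonicity principle: collapsing every vertex of $\mathcal G'$ other than $N$ and $F$ into one vertex $P$ can only increase conductance, and in the resulting graph $\mathcal G(k,\ell)$ the conductance is at most $k(1-w_a)/(k+1-w_a)$, using $\sum_{e\ni F,\,e\neq a}w_e=1-w_a$. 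Hence $(W_a(n))_{n\ge0}$ stochastically dominates a $G$-urn with
\[
G(x)=\frac{x}{x+\frac{k(1-x)}{k+1-x}},
\]
which has only $0$ and $1$ as fixed points and satisfies $G'(0)=(k+1)/k>1$. Proposition~\ref{lem.urne} then gives $W_a(n)/n\to1$ directly --- which is exactly your ``$\liminf A(n)/n>0$'' alternative, but obtained in one line from a scalar urn comparison rather than from a multi-dimensional instability analysis.
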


\begin{proof} 
First note that it is enough to prove the result in the case when there is a unique edge $a = \{N,F\}$ (i.e.\ it has multiplicity one). 
Indeed, if $\{N,F\}$ has multiplicity $m\geq 2$, then, by definition of the ant process, 
at most one of these $m$ edges belongs to the trace of each ant.
Hence, the process obtained by adding the weights of theses edges into one weight is the ants process on the graph in which the $m$ parallel edges have been merged into one edge {with initial weight $m$}.

Assume that $\bs w$ is such that $w_a \notin \{0,1\}$.
Let $\mathcal G'$ be the graph obtained by removing edge $a$ from $\mathcal G$:
i.e.\ $\mathcal G' = (V, E')$ where $E' = E\setminus\{a\}$. We equip the edges of $\mathcal G'$ with the weights $(w_e)_{e\in E'}$, and let $\mathcal C_{\mathcal G'}(\bs w)$ denote the conductance between $N$ and $F$ in $\mathcal G'$ equipped with these weights. We denote $p_a({\bs w})$ the probability to reinforce the edge $a$ when the weights over the graph are given by ${\bs w}$.
With this notation, we have
\[p_a(\bs w) 
= \frac{w_a}{w_a + \mathcal C_{\mathcal G'}(\bs w)}.
\]
We let $k$ denote the number of edges connected to $N$ in $\mathcal G'$, 
and $\ell$ denote the number of edges connected to $F$ in $\mathcal G'$.
We define the graph $\mathcal{G}(k,\ell)$ as the graph with vertex set $\{N, F, P\}$
and with edge set $\{N,P\}$ with multiplicity $k$ and $\{P,F\}$ with multiplicity $\ell$ 
(see Figure~\ref{fig:Gkl}).
We equip the $k$ edges between $N$ and $P$ in $\mathcal{G}(k,\ell)$ 
with the same weights as the $k$ edges connected to $N$ in $\mathcal G'$,
and the $\ell$ edges between $P$ and $F$ in $\mathcal{G}(k,\ell)$ 
with the same weights as the $\ell$ edges connected to $F$ in $\mathcal G'$.
The graph $\mathcal{G}(k,\ell)$ can be obtained from $\mathcal{G}'$ by merging all vertices different from $N$ and $F$ into one node called $P$, 
or equivalently by {adding edges between all pair of vertices disjoint of $N$ and $F$, and} assigning an infinite weight to all edges not connected to $N$ or $F$.
By Rayleigh's monotonicity principle (see \cite[{\S2.4}]{LP}), 
the conductance $\mathcal{C}_{\mathcal{G}(k,\ell)}({\bs w})$ of $\mathcal{G}(k,\ell)$ 
is at least equal to the conductance of~$\mathcal{G}'$.

\begin{figure}
\begin{center}\includegraphics[width=3cm]{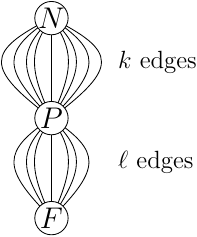}\end{center}
\caption{The graph $\mathcal G(k,\ell)$.}
\label{fig:Gkl}
\end{figure}

The conductance of $\mathcal G(k,\ell)$ with these weights is given by
\[\mathcal C_{\mathcal G(k,\ell)}(\bs w) = \frac{\displaystyle\sum_{e\in E'\colon N\sim e} w_e\sum_{e\in E'\colon F\sim e} w_e}{\displaystyle\sum_{e\in E'\colon N\sim e}w_e + \sum_{e\in E'\colon F\sim e} w_e}
\leq \frac{k (1-w_a)}{k+1-w_a},\]
where $X\sim e$ denotes that the vertex $X$ is an endpoint of $e$, where we have used that for all $e\in E'$, $w_e\leq 1$, and also that $\sum_{e\in E\colon F\sim e}w_e = 1$, and thus \[\sum_{e\in E'\colon F\sim e} w_e = 1- w_a.\]
Therefore, 
\[p_a(\bs w)\geq\frac{w_a}{w_a +\frac{k (1-w_a)}{k+1-w_a}}.\]
Thus, $(W_a(n))_{n\ge 0}$ stochastically dominates a $G$-urn process with
\[G(x)=\frac x{x +\frac{k (1-x)}{k+1-x}}.\]
Note that $G(x)=x$ if and only if $x\in\{0,1\}$, and one can compute that $G'(0){=(k+1)/k}>1$. Thus {Corollary}~\ref{prop.urne} shows that $W_a(n)/n$ converges almost surely to $1$, and as a consequence one also has that $W_e(n)/n$ converges almost surely to 0 for all $e$ connected to $F$ different from $a$.
This concludes the proof.
\end{proof}

\section{Proof of Theorem~\ref{th:main}}
First note that Proposition~\ref{prop:liminf_NF=1} implies that $W_a(n)/n \to 1$, almost surely.  
Note also that by definition of the model, it now suffices to show that the normalised weight of all the other edges connected to $N$ go to zero almost surely, as the weight of any edge $e$ in the tree is always smaller than the weight of the unique edge connected to $N$ on the path from $e$ to $N$.

So let $e$ be some edge connected to $N$, which is different from $a$. 
By assumption, the graph $\mathcal G$ is a tree rooted at $N$ and whose leaves have been merged into $F$. Thus, since the ants are stopped when first hitting $F$, the first time each ant crosses the edge $e$ has to be from $N$ to the other extremity of $e$. Moreover, if an ant crosses the edge $a$ it is stopped immediately. 
This implies that the probability to cross $e$ before reaching $F$ on $\mathcal{G}$ 
is smaller than on the graph consisting only of the two edges $a$ and $e$ (in parallel between $N$ and $F$). This gives for all $n\ge 0$, almost surely, 
\begin{equation}\label{WeWa}
\mathbb P(W_e(n+1) = W_e(n) + 1 \mid {\bf W}(n)) \le \frac{W_e(n)}{W_e(n) + W_a(n)}=\frac{\hat W_e(n)}{\hat W_e(n) + \hat W_a(n)}, 
\end{equation}
where we recall that we write $\hat W(n) = W(n)/(n+2)$.  
Now let $\varepsilon >0$ be fixed. We know that almost surely, for $n$ large enough, $\hat W_a(n) \ge 1-\varepsilon$. 
On the other hand on the event when $\hat W_a(n) \ge 1 - \varepsilon$, for all $n$ larger than some integer $n_0$, we know by \eqref{WeWa} that 
$(W_e(n))_{n\ge n_0}$ is dominated by a $G$-urn process with $G(x) = x/(x+1-\varepsilon)$. Then Proposition~\ref{lem.urne} shows that almost surely, $\limsup_{n\to \infty} \hat W_e(n) \le \varepsilon$. 
Since this holds for all $\varepsilon>0$, this concludes the proof.


\section{The cone}\label{sec:cornet}
We first note that by Proposition~\ref{prop.stoc.approx} and the specific feature{s} of the  cone, the process $\hat {\bf W}(n) := {\bf W}(n)/(n+2)$ is a stochastic approximation on the space
\[\mathcal E' = \{\bs w = (w_1, \ldots, w_4)\in \mathcal E \colon w_1 + w_4 = 1, w_4\leq w_2+w_3\},\]
with $\mathcal E$ as defined in~\eqref{def.E}. 
More precisely, for all $n\geq 0$, we have
\[\hat {\bf W}(n+1) = \hat{\bf W}(n) + 
\frac1{n+3}\big(F(\hat{\bf W}(n))+ \xi_{n+1}\big),\]
with $\xi_{n+1}$ some martingale difference, and 
for all $1\leq i\leq 4$, $F_i(\bs w) = p_i(\bs w) - w_i$, where $p_i(\bs w)$ is the probability that edge $i$ belongs to the trace of a random walk on the graph endowed with weights $\bs w$, which starts 
from $N$ and is killed at $F$. 

{\begin{remark}
Note that, for the cone graph, it is convenient to define $\hat{\bf W}(n)$ as ${\bf W}(n)/(n+2)$ rather than as ${\bf W}(n)/(n+1)$. This is because, by definition of the process, for all $n\geq 0$, $W_1(n) + W_4(n) = n+2$; indeed, $W_1(0) + W_4(0) = 2$ and, at each step we increase exactly one of $W_1(n)$ and $W_4(n)$ by one.
\end{remark}}

To calculate $p_2$ (and thus $p_3$, by symmetry),
we decompose according to the first step of the ant: to reinforce edge~$2$, it has to either go straight through edge $2$ (in which case, no matter what it does later, edge $2$ will be reinforced), or go through edge $3$. In the latter case, the second step of the ant has to be either through edge $2$ (edge $2$ gets reinforced no matter what happens next) or back through edge $3$, in which case we start again. Hence,
\[p_2(\bs w) = \frac{w_2}{w_1+w_2+w_3} +  \frac{w_3}{w_1+w_2+w_3} \left(\frac{w_2}{w_2+w_3+w_4} +\frac{w_3}{w_2+w_3+w_4} \, p_2(\bs w)\right).\]
Solving this in terms of $p_2(\bs w)$ 
gives
\[p_2(\bs w) =  \frac{w_2(w_2+2w_3+w_4)}{(w_1+w_2+w_3)(w_2+w_3+w_4)-w_3^2}.\]
To calculate $p_1(\bs w)$, 
we use the electrical networks method (see, e.g.~\cite{LL}) to see that the probability, starting from~$N$, to cross edge~$1$ before edge~$4$ is~$w_1$ divided by the total conductance between~$N$ and~$F$. 
The latter conductance is given by $w_1 + \frac{(w_2+w_3)w_4}{w_2+w_3+w_4}$, which gives
\[p_1(\bs w)=\frac{w_1}{w_1 + \frac{(w_2+w_3)w_4}{w_2+w_3+w_4}}.\]
Thus, the coordinates of the function $F$ are given by
\begin{linenomath}\begin{align*}
F_1(\bs w) &= \frac{w_1}{w_1 + \frac{(w_2+w_3)w_4}{w_2+w_3+w_4}}-w_1
& F_2(\bs w) &= \frac{w_2(w_2+2w_3+w_4)}{(w_1+w_2+w_3)(w_2+w_3+w_4)-w_3^2}-w_2
\\
F_4(\bs w) &= -F_1(\bs w) 
&F_3(\bs w) &= \frac{w_3(2w_2+w_3+w_4)}{(w_1+w_2+w_3)(w_2+w_3+w_4)-w_2^2}-w_3.
\end{align*}\end{linenomath}
Now our aim is to use the ODE method and for this we proceed in 4 steps:

{\bf (1)} we first remind that $\lim {\hat W_1(n)} = 1$ almost surely as $n\to+\infty$, thanks to Proposition~\ref{prop:liminf_NF=1},

{\bf (2)} we prove that $\liminf_{n\to+\infty}{\hat W_2(n)\wedge \hat W_3(n)}>0$ (where $x\wedge y$ denotes the minimum of $x$ and $y$),

{\bf (3)} we prove that for any $w$ in $\mathcal U: = \{\bs w\in \mathcal E\colon w_1 = 1, w_2w_3\neq 0\}$, the solution of ${\bs{\dot y}} =F({\bs y})$ started at~$\bs w$ 
converges to $(1, \nicefrac13, \nicefrac13, 0)$, 

{\bf (4)} we finally apply Corollary~\ref{cor:pemantle}, together with (1), (2) and (3) to conclude.

\medskip
For {\bf (2)}, note that, 
if $\bs w\in \mathcal E'$, $w_1\to 1$, $w_2 \to 0$, and $w_4\to 0$,
then
\[({w_1}+w_2+w_3)(w_2+w_3+w_4)-w_3^2
\sim w_2+w_3+w_4 + w_2w_3 + w_3w_4
\sim w_2+w_3+w_4,\]
because $w_2w_3 = o(w_3)$ and $w_3w_4=o(w_3)$.
This implies
\ba
F_2(\bs w) 
&\sim \frac{w_2(w_2+2w_3+w_4)}{w_2+w_3+w_4}-w_2
= \frac{w_2(w_2+2w_3+w_4) - w_2(w_2+w_3+w_4)}
{w_2+w_3+w_4}
=\frac{w_2w_3}{w_2+w_3+w_4}.
\ea
Finally, since, for all $\bs w\in\mathcal E'$, $w_4\leq w_2+w_3$, we get that, as $w_1\to 1$, $w_4\to 0$ and $w_2\to 0$,
\[F_2(\bs w) \geq \frac{w_2w_3(1+o(1))}{2(w_2+w_3)} \geq \frac{w_2\wedge w_3}{{4}}(1+o(1)).
\]
In other words, there exists $\varepsilon>0$ such that, for all $\bs w\in\mathcal E'$ with $w_2\leq \varepsilon$, and $w_1\geq 1-\varepsilon$,
 \begin{equation}\label{eq:cornet_F2LB}
 F_2(\bs w)\geq  \frac{w_2\wedge w_3}{8}.
 \end{equation}
By symmetry, for all $\bs w\in\mathcal E'$ such that $w_3\leq \varepsilon$, and $w_1\geq 1-\varepsilon$,
\begin{equation}\label{eq:cornet_F3LB}
F_3(\bs w)\geq  \frac{w_2\wedge w_3}{8}.
\end{equation}
Thus, if we set $H(x) = {9x \bs 1_{x\leq\varepsilon}/8}$,
by Equations~\eqref{eq:cornet_F2LB}, and \eqref{eq:cornet_F3LB},
and since $\hat W_1(n)\to 1$ almost surely as $n\to\infty$, we get that,
 for all sufficiently large $n$,
the random variable $\Phi(n):=W_2(n)\wedge  W_3(n)$ satisfies almost surely on the event when $W_2(n)\neq W_3(n)$, 
\begin{equation}\label{Phi.acc}
\mathbb P\big(\Phi(n+1)= \Phi(n)+1\mid {\bf W}(n)\big) \geq H\big(\hat \Phi(n)\big), 
\end{equation}
with $\hat \Phi(n) := \Phi(n)/n$. 
This is however not sufficient to apply Corollary~\ref{prop.urne}, since when $W_2(n) = W_3(n)$,  one has $\Phi(n+1) = \Phi(n)+1$, only when both edges $2$ and $3$ 
are reinforced at the next step, which holds with smaller probability. 
To overcome this issue, we introduce the following quantity: 
\[\Psi(n) := \Phi(n) + \sum_{k=0}^{n-1} \bs 1\{W_2(k) = W_3(k), \, W_2(k+1) \neq W_3(k+1)\}. \] 
Note that contrarily to $\Phi$, the function $\Psi$ increases by one unit as soon as at least one of the two edges $2$ or $3$ is reinforced, whatever their weights are at this time (in particular this holds even when 
they are equal). 
Therefore, \eqref{eq:cornet_F2LB} and~\eqref{eq:cornet_F3LB}
imply that, almost surely for all sufficiently large $n$, 
\begin{equation}\label{Psi.acc}
\mathbb P\big(\Psi(n+1)= \Psi(n)+1\mid {\bf W}(n)\big) \geq H\big(\hat\Phi(n)\big). 
\end{equation}
We now claim that almost surely one has $\Phi(n) \sim \Psi(n)$ as $n\to+\infty$, 
which by Corollary~\ref{prop.urne}  
 implies $\liminf \Phi(n)>0$, because $H'(0)={\nicefrac 98}>0$.

In fact we prove a stronger statement: almost surely for all $n$ large enough, 
\begin{equation}\label{eq:PsiPhi}
\Psi(n) \le \Phi(n) +  \Phi(n)^{\nicefrac34}.
\end{equation}
To see this, set 
\[Z_n := \max(W_2(n) , W_3(n)) - \min(W_2(n),W_3(n))\quad (\forall n\geq 1).\] 
Note that, conditionally on the event that edge $2$ or $3$ is reinforced but not both, 
the one with largest weight is more likely to be reinforced than the other. 
This implies that the process $(Z_k)_{k\ge 0}$ taken at its jump times (i.e.\ the times $k=0$ and all $k\geq 1$ such that $Z_k\neq Z_{k-1}$)
stochastically dominates a simple random walk $(\mathrm{rSRW}_n)_{n\geq 0}$ 
on $\mathbb Z_+=\{0,1,\dots\}$ reflected at $0$.
We let $L(n)$ denote the number of times $(Z_k)_{k\ge 0}$ returns to zero before time~$n$, i.e.
\[L(n) = \sum_{k=0}^n \bs 1\{Z_k = 0\},\]
and $N(n)$ the number of jump times of $(Z_k)_{k\ge 0}$ during its first $L(n)$ excursions out of zero
(equivalently the number of times only one of the two edges $2$ or $3$ is reinforced before the last time before $n$ when the weights of edges $2$ and $3$ are equal).
Then, for all integers~$n$, $L(n)$ is stochastically dominated by the number of returns to the origin of $(\mathrm{rSRW}_k)_{k\geq 0}$ before time $N(n)$. 
Moreover, by definition,
\begin{equation}\label{L(n).1}
\Psi(n) - \Phi(n) = \sum_{k=0}^{n-1} \bs 1\{W_2(k) = W_3(k), \, W_2(k+1)\neq W_3(k+1)\} \le 1+ L(n).
\end{equation}
During each excursion out of the origin, 
the probability to hit level $N^{\nicefrac58}$ is equal to $N^{-\nicefrac58}$, by a standard Gambler's ruin estimate. Moreover, by Hoeffding's inequality, for any $N\ge 1$, 
the probability for a simple random walk started at level $N^{\nicefrac58}$ to hit $0$ before time $N$ is bounded by $2N\exp(-N^{\nicefrac14}/2)$. 
Therefore the probability that 
$(\mathrm{rSRW}_k)_{k\geq 0}$ returns more than $N^{\nicefrac34}/2$ times to the origin before time $N$ is bounded by $(1-N^{-\nicefrac58})^{N^{3/4}/2}+2N\exp(-N^{\nicefrac14}/2)  \le  \exp(-N^{\nicefrac18}/4)$, for all large $N$. 
Using {the} Borel-Cantelli lemma, we deduce that, almost surely for all large $N$, $(\mathrm{rSRW}_k)_{k\geq 0}$ returns at most $N^{\nicefrac34}/2$ times to the origin before time $N$.
This implies that, 
almost surely for all large $n$, 
\begin{equation}\label{L(n).2}
L(n) \le \frac12 N(n)^{\nicefrac 34}.
\end{equation} 
On the other hand, during each excursion of the process $(Z_k)_{k\ge 0}$ out of zero, the process $(\Phi(k))_{k\ge 0}$ increases by at least half the number of jumps made by $(Z_k)_{k\ge 0}$ during this excursion. It follows that $N(n)\le 2\Phi(n)$, for all $n\ge 0$, and together with \eqref{L(n).1} and \eqref{L(n).2}, this concludes the proof of~\eqref{eq:PsiPhi}, and thus of point ${\bf (2)}$.

\medskip
{\bf (3)}  Let $\bs w \in \mathcal U$, 
and let $\bs \Phi(t) = (\Phi_1(t), \Phi_2(t), \Phi_3(t), \Phi_4(t))$ be the solution of $\dot {\bs y} = F(\bs y)$ started at $\bs w$. We need to prove that $\bs \Phi(t)\to (1, \nicefrac13, \nicefrac13, 0)$.

\begin{figure}
\begin{center}
\includegraphics[width=8cm]{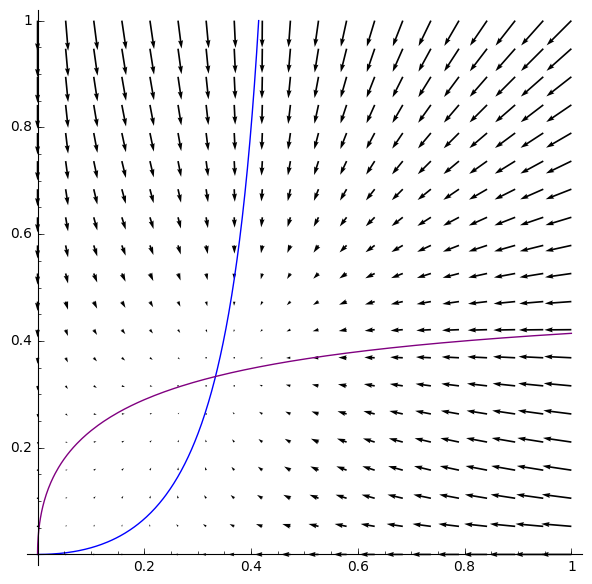}
\end{center}
\caption{The vector field $F(\bs w)$ on $\mathcal E\cap\{w_1 = 1\}$ in the cone case.}
\label{fig:champs}
\end{figure}

For all $\bs w\in \mathcal U$, 
we have
\[F_2(\bs w) = \frac{w_2(w_2+2w_3)}{(1+w_2+w_3)(w_2+w_3)-w_3^2}-w_2
\quad \text{ and }\quad
F_3(\bs w) = \frac{w_3(2w_2+w_3)}{(1+w_2+w_3)(w_2+w_3)-w_2^2}-w_3.
\]
Note that $F_2(\bs w) = 0$ if and only if $w_2 = 0$ or
\[w_2+2w_3 = (1+w_2+w_3)(w_2+w_3)-w_3^2
\quad\Leftrightarrow\quad
w_3 = w_2^2+2w_2w_3.\]
Similarly, $F_3(\bs w) = 0$ if and only if $w_3 = 0$ or
$w_2 = w_3^2+2w_2w_3$.
Thus, for all $\bs w\in\mathcal E\cap \{w_1 = 1\}$, $F(\bs w) = 0$ 
if and only if $w_2 = w_3 = 0$ or {$w_2w_3\neq0$ and}
\[\begin{cases}
w_3 = w_2^2+2w_2w_3\\
w_2 = w_3^2+2w_2w_3
\end{cases}
\quad
\Leftrightarrow\quad
\begin{cases}
w_3-w_2 = w_2^2-w_3^2\\
w_2 = w_3^2+2w_2w_3
\end{cases}
\Leftrightarrow\quad
\begin{cases}
w_3=w_2\\
1 = 3w_2
\end{cases}
\]
i.e.\ $w_2 = w_3 = \nicefrac13$.
Thus the only zeros of $F$ on $\mathcal E'\cap \{w_1 = 1\}$ are $(1, 0, 0, 0)$ and $(1,\nicefrac13, \nicefrac13, 0)$.
Similar calculations show that $F_2(\bs w)>0$ if and only if $w_2<{w_3^2}/(1-2w_3)$, and $F_3(\bs w)>0$ if and only if $w_3<{w_2^2}/(1-2w_2)$. In Figure~\ref{fig:champs}, we plot the vector field $(F_2(1, w_2, w_3, 0), F_3(1, w_2, w_3, 0))$ with $w_2$ on the horizontal axis and $w_3$ on the vertical axis. The blue curve is where $F_2=0$, the purple curve where $F_3 = 0$.
Note that $[0,1]^2\setminus(\{w_3={w_2^2}/(1-2w_2)\}\cup \{w_2={w_3^2}/(1-2w_3)\})$ has four connected components: on the bottom-left one, $F_2, F_3>0$, on the top-right one, $F_2, F_3<0$, on the bottom right one, $F_2<0$ while $F_3>0$, and, finally, on the top-left one, $F_2>0$ while $F_3<0$.

Thus, any solution of the ODE started in 
$\mathcal U$ converges to $(1,\nicefrac13, \nicefrac13, 0)$,
 as claimed.

\medskip
{\bf (4)} Finally, we prove that $\hat{\bf W}(n) \to (1,\nicefrac13, \nicefrac13, {0})$ almost surely as $n\to+\infty$. For this we apply Corollary~\ref{cor:pemantle}: in Steps (1) and (2), we have shown that
$\lim_{n\to+\infty} \hat W_1(n) = 1$ and $\liminf_{n\to+\infty} \hat W_2(n)\wedge \hat W_3(n)>0$.
This implies that almost surely the limiting set $L(\hat{\bf W})$ of the process $(\hat {{\bf W}}(n))_{n\ge 0}$ is contained in $\mathcal U$ and thus assumption (i) of Corollary~\ref{cor:pemantle} holds. 
Then {\bf (3)} implies that  assumption (ii) of Corollary~\ref{cor:pemantle} holds.
For assumption (iii), note that, on $\mathcal U$, the curves $F_2(\bs w) = 0$ and $F_3(\bs w) = 0$ intersect at $\bs p=(1,\nicefrac13, \nicefrac13, {0})$, and thus define four ``branches'' around $\bs p$. 
For all $t$ small enough ($t<\nicefrac1{10}$ is enough),
let $\bs x_t$ the point at distance $t$ on the ``left''-branch of the curve $F_2(\bs w) = 0$;
then let $\mathcal C_t$ be the rectangle 
whose four vertices are on each of the four branches (one of them being $\bs x_t$)
and whose four sides are parallel to either $(1, 1)$ or $(1, -1)$;
this rectangle exists because the curve $F_2(\bs w) = 0$ 
is the image of the curve $F_3(\bs w) = 0$ by the symmetry of axis $w_2 = w_3$.
In (3), we have shown that the vector field $F$ at any point of $\mathcal C_t$ points towards the interior of $\mathcal C_t$.
We now let $d(\bs w) = \inf \{t>0 \colon \bs w\in  \mathcal C_t\}$.
Note that $t\mapsto d(\Phi_t(\bs w))$ is non-increasing because the vector field $F$ at any point of $\mathcal C_t$ points towards the interior of $\mathcal C_t$.
This implies uniform convergence on $\mathcal U' = \cup_{t<\nicefrac1{10}} \mathcal C_t$.
Indeed, if we assume that the convergence is not uniform, then, there exists $\delta>0$ and $(s_n)_{n\geq 0}$ and $(\bs w_n)_{n\geq 0}$ such that, for all $n\geq 0$, $s_n\geq n$, $\bs w_n\in \mathcal U'$ and $d(\Phi_{s_n}(\bs w_n))\geq \delta$. 
By compactness, we can assume that $\bs w_n \to \bs w\in \overline{\mathcal U'}$ as $n$ goes to infinity.
For all $t\geq 0$, for all $n\geq t$, $d(\Phi_t(\bs w_n))\geq d(\Phi_{s_n}(\bs w_n))\geq \delta$.
By continuity, $d(\Phi_t(\bs w))\geq \delta$, which is impossible.

Thus, by Corollary~\ref{cor:pemantle}, this implies that $L(\hat{\bf W})= \{(1, \nicefrac13, \nicefrac13, 0)\}$, as desired.


\section{Two parallel paths: proof of Theorem~\ref{th:two_paths}}\label{sec:2paths}
Recall that, in Theorem~\ref{th:two_paths}, 
the graph $\mathcal G=(V,E)$ is the $(p+q)$-path graph of Figure~\ref{fig:paths}.
In this section, we assume that $\min(p,q)\geq 2$.
We let $a_1, \ldots, a_p$ denote the $p$ edges on one of the paths from $N$ to $F$ (ordered from $N$ to $F$, i.e.\ $a_1$ links to $N$ while $a_p$ links to $F$), and $b_1, \ldots, b_q$ denote the edges on the other path from $N$ to $F$ (also ordered from $N$ to $F$).

Finally, for all $e\in E=\{a_1, \ldots, a_p, b_1, \ldots, b_q\}$, we let $W_e(n)$ denote the weight of edge~$e$ at time $n$ and set ${\bf W}(n) = (W_{a_1}(n), \ldots, W_{a_p}(n), W_{b_1}(n), \ldots, W_{b_q}(n))$, for all $n\geq 0$.

The proof of Theorem~\ref{th:two_paths} uses the ODE method, as for the  cone graph in the previous section. We roughly follow the same steps here, but there are some important differences.  
We first show in Subsection~\ref{subsec.2paths.1} that almost surely the limiting set $L(\hat{\bf W})$ of the sequence of normalised weights $\hat{{\bf W}}(n):={\bf W}(n)/(n+1)$, 
is contained in $(0,1]^{p+q}$. Then, in Subsection~\ref{subsec.2paths.2}, we give an explicit expression for the vector field $F$ appearing in the stochastic approximation satisfied by $(\hat{{\bf W}}(n))_{n\ge 0}$.  
{In this section, we also define a sequence  of compact sets $(K_n)_{n\ge 0}$ in $[0,1]^{p+q}$, which we prove to be decreasing. }
Then in Subsection~\ref{subsec.2paths.3}, we show that $L(\hat{\bf W})\subseteq K_n$, for all $n\ge 0$, and finally in Subsection~\ref{subsec.2paths.4}, we prove that the intersection of the $K_n$'s is reduced to a single point, which is precisely the limiting point arising in the statement of Theorem~\ref{th:two_paths}.

\subsection{Proof that none of the edges has a limiting weight equal to zero}\label{subsec.2paths.1}
We prove here the following result. 

\begin{proposition}\label{prop:multiedges}
Let $p$ and $q$ be integers such that $\min(p, q)\geq 2$.
If $\mathcal G = (V,E)$ is the $(p,q)$-path graph, then there exists a constant $c_{p,q}>0$, such that almost surely for all $e\in E$,
\[\liminf_{n\to +\infty} \frac{W_e(n)}n> c_{p,q}.\]
\end{proposition}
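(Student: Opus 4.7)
The strategy is to prove, by induction on $k$, that $\liminf_{n\to\infty}\hat W_{a_k}(n)\ge c_k$ almost surely for some deterministic $c_k>0$ and each $k=1,\dots,p$; the analogous claim for the $b$-path follows by the symmetric argument. Each inductive step reduces to a one-dimensional stochastic comparison to which Corollary~\ref{prop.urne} applies.

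The core ingredient is the explicit formula for the probability that edge $a_k$ belongs to the trace of the next walker, obtained from the electrical-network interpretation (treat $a_k$ as an edge toward an absorbing state and compute the exit probability from $N$, noting that the $a$-path suffix past $a_k$ becomes a dead end off of $F$):
\[
p_{a_k}(\bs w)\;=\;\frac{C^{(k)}(\bs w)}{C^{(k)}(\bs w)+C_b(\bs w)},\quad\text{where}\quad C^{(k)}(\bs w):=\Bigl(\sum_{j=1}^k\tfrac{1}{w_{a_j}}\Bigr)^{\!-1},\ C_b(\bs w):=\Bigl(\sum_{\ell=1}^q\tfrac{1}{w_{b_\ell}}\Bigr)^{\!-1}.
\]
A key uniform bound---this is where the hypothesis $q\ge 2$ enters---is $C_b(\bs w)\le 1/q\le 1/2$, since $w_{b_\ell}\le 1$ for every $\ell$.

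For the base case $k=1$, $C^{(1)}(\bs w)=w_{a_1}$ yields $p_{a_1}(\bs w)\ge qw_{a_1}/(1+qw_{a_1})=:G_1(w_{a_1})$, and $G_1'(0)=q\ge 2>1$, so Corollary~\ref{prop.urne} produces a deterministic $c_1>0$ with $\liminf_n\hat W_{a_1}(n)\ge c_1$ a.s. For the inductive step, assume the statement holds for all $j<k$. Almost surely there is a random integer $N$ such that $\hat W_{a_j}(n)\ge c_j/2$ for all $j<k$ and $n\ge N$, and for such $n$ one has $\sum_{j<k}1/\hat W_{a_j}(n)\le M:=2(k-1)/\min_{j<k}c_j$, whence $C^{(k)}(\hat{\bf W}(n))\ge \hat W_{a_k}(n)/(1+M\hat W_{a_k}(n))$ and
\[
p_{a_k}(\hat{\bf W}(n))\;\ge\;\frac{q\hat W_{a_k}(n)}{1+(M+q)\hat W_{a_k}(n)}=:G_k(\hat W_{a_k}(n)),\qquad n\ge N,
\]
with $G_k'(0)=q\ge 2$. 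Applying Corollary~\ref{prop.urne} to the shifted process $(W_{a_k}(N+n))_{n\ge 0}$ conditionally on $\mathcal F_N$ produces a deterministic $c_k>0$ with $\liminf_n\hat W_{a_k}(n)\ge c_k$ a.s. Iterating through $k=1,\dots,p$ and repeating the argument with the roles of $a$ and $b$ swapped (now invoking $p\ge 2$) completes the proof with $c_{p,q}:=\min_k c_k\wedge\min_\ell d_\ell$.

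The main technical difficulty is the last step: the bound $p_{a_k}\ge G_k(\hat W_{a_k})$ only holds from a random time $N$ onward, and $W_{a_k}(N)$ is random rather than equal to $1$, so Corollary~\ref{prop.urne} cannot be applied directly to the unshifted process. This is handled by comparing with a $G_k$-urn process started at $W_{a_k}(N)$ at time $N$; since the stable fixed-point analysis underlying Corollary~\ref{prop.urne} (via Proposition~\ref{lem.urne}) is insensitive to the initial state, it produces the same deterministic asymptotic liminf regardless of the value of $W_{a_k}(N)$.
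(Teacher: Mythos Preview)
Your proof is correct. The base case $k=1$ coincides with the paper's Lemma~\ref{lem:liminf_a1}: both bound $C_b\le 1/q$ and compare with the urn function $G(x)=x/(x+1/q)$.

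The inductive step, however, is genuinely different from the paper's. The paper does \emph{not} use the closed-form identity $p_{a_k}(\bs w)=C^{(k)}(\bs w)/(C^{(k)}(\bs w)+C_b(\bs w))$ for this proposition (this formula only appears later as~\eqref{eq:Fab} in the ODE analysis). Instead, the paper passes to the subsequence $\tau_n$ of reinforcement times of edge $a_k$, observes that at those times $W_{a_k}(\tau_n)=n+1$ exactly, and then argues locally at the vertex between $a_k$ and $a_{k+1}$: the ant makes a geometric number of returns to that vertex before escaping backwards across $a_{k-1}$, each return giving an independent chance of probability $\rho(n)$ to cross $a_{k+1}$. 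This yields a $G$-urn comparison for $W_{a_{k+1}}(\tau_n)$ on the subsequence, using only the inductive hypothesis for the single preceding edge $a_k$. Your route is more global: you invoke the full inductive hypothesis for \emph{all} $a_1,\dots,a_{k-1}$ to bound $\sum_{j<k}1/\hat W_{a_j}$ uniformly, plug into the conductance formula, and obtain the $G_k$-urn bound on the original time scale with no subsequence. Your argument is shorter and more transparent, at the price of a stronger inductive hypothesis and reliance on the exact formula for $p_{a_k}$; the paper's argument is more hands-on but uses only local information about two consecutive edges. Your handling of the random onset time $N$ (not a stopping time) is at the same level of rigour as the paper's ``on $E_m$'' phrasing, and the coupling justification you sketch is the standard way to close that gap.
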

This proposition is proved by induction on $k$ for $a_k\in \{a_1, \ldots, a_p\}$ and by induction on $\ell$ for $b_\ell\in \{b_1, \ldots, b_q\}$. We prove the base case separately in the following lemma.
\begin{lemma}\label{lem:liminf_a1}
In the $(p+q)$-path graph, if $q\geq 2$, then there exists $c>0$, such that almost surely, $\liminf_{n\to+\infty} W_{a_1}(n)/n\ge c$, and by symmetry, if $p\geq 2$, then  almost surely, $\liminf_{n\to+\infty} W_{b_1}(n)/n\ge c$.
\end{lemma}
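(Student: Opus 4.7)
My plan is to apply Corollary~\ref{prop.urne} to the process $X_n := W_{a_1}(n)$ with respect to the natural filtration $(\mathcal F_n)_{n \ge 0}$ of $(\bs W(n))_{n \ge 0}$. The conclusion of the lemma will follow once one exhibits a function $G$ and a constant $c > 0$ such that
\[
\mathbb P\bigl(W_{a_1}(n+1) = W_{a_1}(n)+1 \,\big|\, \mathcal F_n\bigr) \;\ge\; G\bigl(\hat W_{a_1}(n)\bigr),
\]
with $G(x) > (1+c)x$ on a right-neighbourhood of $0$.

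The core step is an explicit computation of the probability $p_{a_1}(\bs w)$ that the edge $a_1$ belongs to the trace of a walk from $N$ to $F$ under the weights $\bs w$. Since the $a$-path is accessible from the rest of the graph only through $N$, the trace contains $a_1$ if and only if the walk ever takes the first step from $N$ into the $a$-path. I would decompose the trajectory into successive excursions from $N$, each of which ends either by a crossing of $a_1$, or by absorption at $F$ while on the $b$-path, or by returning to $N$; using the standard one-dimensional gambler's-ruin formula to compute the return probability of an excursion into the $b$-path, I expect to obtain
\[
p_{a_1}(\bs w) \;=\; \frac{w_{a_1}}{w_{a_1} + C_b(\bs w)},
\qquad C_b(\bs w) \;:=\; \Bigl(\sum_{j=1}^q \tfrac{1}{w_{b_j}}\Bigr)^{-1},
\]
where $C_b(\bs w)$ is precisely the effective conductance of the $b$-path between $N$ and $F$. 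Equivalently, one may derive this by a series/parallel reduction, collapsing the $b$-path into a single edge of conductance $C_b$ and then applying the standard trace probability for two parallel edges between $N$ and $F$.

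The second step is the trivial uniform bound $\hat W_{b_j}(n) \le 1$, valid for every $j$ and every $n$; it implies $\sum_{j=1}^q 1/\hat W_{b_j}(n) \ge q$, and hence $C_b\bigl(\hat{\bs W}(n)\bigr) \le 1/q \le 1/2$ whenever $q \ge 2$. Combined with the formula for $p_{a_1}$, this yields
\[
p_{a_1}\bigl(\hat{\bs W}(n)\bigr) \;\ge\; G\bigl(\hat W_{a_1}(n)\bigr), \qquad G(x) \;:=\; \frac{x}{x + 1/q}.
\]
Since $G'(0) = q \ge 2 > 1$, one may pick $c > 0$ small enough that $G(x) > (1+c)x$ for all $x \in (0, c)$, and Corollary~\ref{prop.urne} then delivers $\liminf_{n \to \infty} W_{a_1}(n)/n \ge c$ almost surely. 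The corresponding statement for $W_{b_1}$ under the hypothesis $p \ge 2$ follows by exchanging the roles of the two paths.

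The only step that requires genuine work is the derivation of the explicit formula for $p_{a_1}(\bs w)$; everything else is a direct application of Corollary~\ref{prop.urne} together with the trivial bound $\hat W_{b_j}(n) \le 1$. Conceptually, the lemma says that as soon as $q \ge 2$, the excursions from $N$ into the $b$-path give the walk ``multiple attempts'' to cross $a_1$, which is exactly what pushes $G'(0)$ above $1$ and forces $W_{a_1}$ to grow linearly.
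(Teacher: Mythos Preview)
Your proposal is correct and follows essentially the same route as the paper: compute $p_{a_1}(\bs w)=w_{a_1}/(w_{a_1}+C_b(\bs w))$ via the effective conductance of the $b$-path, bound $C_b\le 1/q$ using $\hat W_{b_j}(n)\le 1$, and conclude by comparison with the urn function $G(x)=x/(x+1/q)$ whose derivative at~$0$ equals $q>1$. The only cosmetic difference is that the paper invokes Proposition~\ref{lem.urne} on the dominated $G$-urn (obtaining the explicit constant $c=1-1/q$), whereas you invoke Corollary~\ref{prop.urne} directly; both are valid since the lemma only requires \emph{some} $c>0$.
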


\begin{proof}
For all $n\geq 0$, we have
\begin{equation}\label{eq:prob_a1}
\mathbb P(W_{a_1}(n+1) =  W_{a_1}(n)+1 \mid {\bf W}(n))
=\mathbb P(a_1 \in \gamma(n+1)\mid {\bf W}(n))
= \frac{W_{a_1}(n)}{W_{a_1}(n)+
\frac{1}{\sum_{i=1}^q W_{b_i}(n)^{-1}}}.
\end{equation}
Note that, by definition of the model, 
for all $n\geq 0$, 
we have either $a_p\in\gamma(n+1)$ or $b_q\in\gamma(n+1)$ but not both.
This implies that, for all $n\geq 0$,
\[W_{a_p}(n)+W_{b_q}(n)=n+2.\]
By definition of the model again, 
we have that $a_p\in\gamma(n+1)\Rightarrow a_{p-1}\in \gamma(n+1) \Rightarrow \cdots \Rightarrow a_1\in\gamma(n+1)$, and 
$b_q\in\gamma(n+1)\Rightarrow b_{q-1}\in\gamma(n+1)\Rightarrow  \cdots \Rightarrow  b_1\in\gamma(n+1)$, which imply that, for all $n\geq 0$,
\[
W_{a_p}(n)\leq W_{a_{p-1}}(n)\leq \cdots \leq W_{a_1}(n)\leq n+1\quad\text{ and }\quad 
W_{b_q}(n)\leq W_{b_{q-1}}(n)\leq\cdots \leq  W_{b_1}(n)\leq n+1.
\]
Thus,~\eqref{eq:prob_a1} implies that
\[\mathbb P(W_{a_1}(n+1) =  W_{a_1}(n)+1 \mid {\bf W}(n))
\geq \frac{W_{a_1}(n)}{W_{a_1}(n)+\frac{1}{q(n+1)^{-1}}}
=\frac{Z(n)}{Z(n)+\nicefrac 1q},
\]
where we have set $Z(n):=W_{a_1}(n)/(n+1)$, for all $n\geq 0$.
This implies that $(W_{a_1}(n))_{n\ge 0}$ stochastically dominates a $G$-urn process, 
where, for all $z\in[0,1]$,
\[G(z) = \frac{z}{z+\nicefrac1{q}}.\]
Note that $G(z) = z$ if and only if $z=0$ or $z=1-{\nicefrac1q}$
(which is positive because $q\geq 2$, by assumption);
furthermore, one can check that $G$ is $C^1$ and $G'(0) = q > 1$, 
implying that the normalized $G$-urn process converges almost surely to $c:= 1-\nicefrac1q>0$, when $n\to+\infty$, by Proposition~\ref{lem.urne}.
This concludes the proof.
\end{proof}

\begin{proof}[Proof of Proposition~\ref{prop:multiedges}]
We prove the result by induction on $k\in \{1, \ldots, p\}$. 
The case $k=1$ follows from Lemma~\ref{lem:liminf_a1}.
Assume the result holds true for some $k<p$, i.e.\
there exists {$c\in (0,1/2)$} such that almost surely $\liminf W_{a_k}(n)/n> c$. 
{Let us define the events 
\[E_m:=\{W_{a_k}(n) \ge c n, \forall n\ge m\} \qquad (m\geq 1).\]
Then, $\bigcup_m E_m$ holds almost surely.}

Let $\tau_n$ be the time 
when edge~$a_k$ is reinforced for the $n$-th time 
(so by definition $W_{a_k}(\tau_n) = n+1$). 
Note that by definition $\tau_n\ge n$ for all $n\ge 1$. Thus, for all $n\ge m$, on $E_m$, using that {$W_{a_{k-1}}(\tau_n)\le \tau_n+1$ and $\tau_n\le (n+1)/c$, one has
\[\frac{W_{a_k}(\tau_n)}{W_{a_k}(\tau_n) + W_{a_{k-1}}(\tau_n)} \ge \frac{c(n+1)}{c(n+1)+n+1+c} \ge c/2.\]  }
It follows that, almost surely on the event $E_m$, if the weight of $a_{k+1}$ is zero, then,
for all $n\ge m$, the $\tau_n$-th ant makes at least a geometric number 
of crossings of edge $a_k$, with success probability $\nu:=1-c/2$, 
before jumping across edge $a_{k-1}$. 
Consequently, on $E_m$, and for $n\ge m$, 
the probability to reinforce edge $a_{k+1}$ at time $\tau_n$ 
is at least $1- (1-\rho(n))^{X_n}$, 
where $(X_n)_{n\ge 0}$ is a sequence of i.i.d.\ geometric random variables with parameter~$\nu$, 
and 
\[\rho(n) = \frac{W_{a_{k+1}}(\tau_n)}{W_{a_{k+1}}(\tau_n)+ n+ 1}.\]
Thus, letting ${u}_{k+1}(n) = W_{a_{k+1}}(\tau_n)/(n+1)$, we find 
\ba
\mathbb E[W_{a_{k+1}}(\tau_{n+1}) -W_{a_{k+1}}(\tau_n)\mid \mathcal F_{\tau_n}]  
&\geq \mathbb E[1-(1-\rho(n))^{X_n}\mid W_{a_{k+1}}(\tau_n)]
= 1-\frac{\nu(1-\rho(n))}{1-(1-\nu)(1-\rho(n))}\\
&= \frac{\rho(n)}{1-  (1-\nu)(1-\rho(n))}
= \frac{{u}_{k+1}(n)}{{u}_{k+1}(n) + \nu},
\ea
with $(\mathcal F_i)_{i\ge 0}$ the natural filtration of the process,
and where we used that $\mathbb E x^{X_n} = \nu x/(1-(1-\nu)x)$ for all $x\in(0,1)$. 
It follows that on the event $E_m$, the process $(W_{a_{k+1}}(\tau_n))_{n\ge m}$ stochastically dominates a $G$-urn process (starting from $W_{a_{k+1}}(\tau_m)$), 
with $G(x): = \frac{x}{x+\nu}$. 
Since $G(x)> x$ for all $x\in(0,c/2)$, it follows from Proposition~\ref{lem.urne} that almost surely 
$\liminf {u}_{k+1}(n) \ge c/2$. We deduce the induction step, using that by hypothesis, $\limsup \tau_n/n \le 1/c$. 
\end{proof}

\subsection{The stochastic algorithm and 
a sequence of decreasing compact subspaces.}\label{subsec.2paths.2} 
Recall that we set $\hat {\bf W}(n) := {\bf W}(n)/(n+1)$, for all $n\geq 0$.
Note also that, by definition of the model, for all $n\geq 0$,
\[\hat {{\bf W}}(n) \in \mathcal E' := 
\{\bs w \in \mathcal E \colon w_{a_p} = 1 - w_{b_q}, w_{b_q}\leq w_{b_{q-1}}\leq\cdots \leq w_{b_1},
w_{a_p}\leq w_{a_{p-1}}\leq\cdots\leq w_{a_1}\},\]
with $\mathcal E$ as defined in \eqref{def.E}. 
Moreover, for all $n\geq 0$, we have
\[\hat {\bf W}(n+1) = \hat{\bf W}(n) + 
\frac1{n+2}\big(F(\hat{\bf W}(n)) + \xi_{n+1}\big),
\]
with $\xi_{n+1}$ some martingale difference and with $F$ as defined in~\eqref{def.F}. More specifically the coordinates of $F$ can be computed explicitly here, 
and are given by, for all $1\leq k\leq p$ and $1\leq \ell\leq q$, for all $\bs w\in \mathcal E'$,
\begin{equation}\label{eq:Fab}
F_{a_k}(\bs w) = \frac{S^a_k(\bs w)}
{S^a_k(\bs w)+S^b_q(\bs w)}
-w_{a_k}
\quad\text{ and }\quad
F_{b_\ell}(\bs w) = \frac{S^b_\ell(\bs w)}
{S^b_\ell(\bs w)+S^a_p(\bs w)}
-w_{b_k},
\end{equation}
where we have defined, for any $\bs w \in [0,1]^E$, $m\in\{a,b\}$, 
and $s$ an integer such that $1\leq s\leq p$ if $m = a$, and $1\leq s\leq q$ if $m = b$,
\begin{equation}\label{eq:def_Sq}
S^m_s(\bs w) = \frac1{\sum_{i=1}^s\frac1{w_{m_i}}}.
\end{equation}
Note that, for all $1\le k\le p$, $S_k^a(\bs w)=0$ if and only if $w_{a_i}=0$ for some $1\le i \le k$, 
and for all $1\le \ell\le q$, $S_\ell^b(\bs w)=0$ if and only if $w_{b_i}=0$ for some $1\le i \le \ell$. 

\bigskip
To prove Theorem~\ref{th:two_paths}, we use the ODE method and thus start by studying the solutions of the equation ${\bs{\dot y}} =F({\bs y})$. To do so, we define a sequence $(K_n)_{n\geq 0}$ of decreasing compact subsets of $\mathcal E'$ such that (A) for all $n\geq 0$, $L(\hat{\bf W})\subseteq K_n$, and (B) the intersection of all these compacts is $\{\bs w^*\}$, where $w_{a_k}^* = \alpha^k$ and $w_{b_\ell}^* = \beta^\ell$, with $(\alpha, \beta)$ as in Theorem ~\ref{th:two_paths}.
We prove (A) in Section~\eqref{subsec.2paths.3}, and (B) in Section~\ref{subsec.2paths.4}.
In the rest of this section, we define the sequence $(K_n)_{n\geq 0}$ and show that it is decreasing, 
i.e.\ that $K_{n+1} \subset K_n$ for all $n\ge 0$.
For this we need some additional notation.  
For all $\bs u,\bs v \in \mathcal E'$, we let
\[H_{a_k}(\bs u,\bs v)
= \frac{S^a_k(\bs u)}{S^a_k(\bs u)+S^b_q(\bs v)}
\quad\text{ and }\quad
H_{b_\ell}(\bs u,\bs v)
= \frac{S^b_\ell(\bs u)}{S^b_\ell(\bs u)+S^a_p(\bs v)},
\]
for all $1\leq k\leq p$ and $1\leq \ell\leq q$.
Recall further that by Proposition~\ref{prop:multiedges}, there exists a constant $c_{p,q}>0$, such that almost surely 
\begin{equation}\label{K0def}
L(\hat{\bf W}) \subseteq \{\bs w\colon w_e\ge c_{p,q}, \text{ for all }e\in E\}.
\end{equation}
{We also define $\bs w^*$ as the limiting vector appearing in the statement of Theorem~\ref{th:two_paths}. More precisely, we have }
\begin{equation}\label{w*}
{w_{a_k}^* = \alpha^k, \quad \text{and} \quad w_{b_\ell}^* = \beta^\ell,}
\end{equation}
{for all $1\le k\le p$, $1\le \ell \le q$, with $(\alpha,\beta)$ the unique solution of the system~\eqref{eq:system} in $(0,1)^2$ (existence and {uniqueness} of the solution for this system of equations will be proved later, at the end Subsection~\ref{subsec.2paths.4}).}
We then define $\bs u^{(0)}$ and $\bs v^{(0)}$ by 
\[u^{\sss (0)}_{a_k}=u_0^k, \quad 
u^{\sss (0)}_{b_\ell}=u_0^\ell,\quad \text{ and }\quad  
v^{\sss (0)}_{a_k}=v^{\sss (0)}_{b_\ell}=1,\] 
for all $1\le k\le p$, $1\le \ell \le q$, with $u_0$ chosen arbitrarily so that 
\[0< u_0<\min(1-\nicefrac1q,\alpha,\beta,c_{p,q}).\] 
The fact that we choose $u_0\le \min(\alpha,\beta)$ entails in particular 
\[
0< u^{\sss (0)}_e\leq  w^*_e, \quad \text{for all }e\in E.
\]
Next we define inductively two sequences $(\bs u^{\sss (n)})_{n\ge 0}$ and $(\bs v^{\sss (n)}))_{n\ge 0}$, by 
\[\bs u^{\sss (n+1)}= H(\bs u^{\sss (n)}, \bs v^{\sss (n)}), \quad \text{and}\quad \bs v^{\sss (n+1)}=  H(\bs v^{\sss (n)}, \bs u^{\sss (n)}).\]
Finally, we define the sequence $(K_n)_{n\ge 0}$ by 
\[
K_n:=\{\bs w\in \mathcal E'\colon u^{\sss (n)}_e\le w_e \le v^{\sss (n)}_e,  \text{ for all }e\in E\}  \quad (\forall n\ge 0).
\]
We prove now that this sequence is decreasing, that is, $K_{n+1}\subset K_n$, for all $n\ge 0$. More precisely, we prove that, for all $n\ge 0$, 
\ban
\bs u^{\sss (n)}&<H(\bs u^{\sss (n)}, \bs v^{\sss (n)})=\bs u^{\sss (n+1)}\label{ineq1}\\
\bs v^{\sss (n)}&>H(\bs v^{\sss (n)}, \bs u^{\sss (n)})=\bs v^{\sss (n+1)}.\label{ineq2}
\ean
We reason by induction on $n$: first note that, for all $1\le k\le p$ and $1\le \ell \le q$,
\[
S^a_k(\bs u^{\sss (0)})=u_0^k\frac{1-u_0}{1-u_0^k}\quad\text{ and }\quad S^b_\ell(\bs v^{\sss (0)})=\frac{1}{\ell},
\]
which implies, using that $1-u_0> 1/q$,
\[u^{\sss (1)}_{a_k}=H_{a_k}(\bs u^{\sss (0)},\bs v^{\sss (0)})=\frac{u_0^k(1-u_0)}{u_0^k(1-u_0)+(1-u_0^k)/q}> \frac{u_0^k/q}{u_0^k/q+(1-u_0^k)/q}=u_0^k=u^{\sss (0)}_{a_k},\]
and
\[v^{(1)}_{a_k}=H_{a_k}(\bs v^{\sss (0)},\bs u^{\sss (0)})< 1=v^{\sss (0)}_{a_k}.\]
Similarly, for all $1\le \ell \le q$, 
\[u^{\sss (1)}_{b_\ell}> u^{\sss (0)}_{b_\ell}\quad \text{ and }\quad v^{\sss (1)}_{b_\ell}< v^{\sss (0)}_{b_\ell}.\] 
We now proceed to the induction step and assume that, for some $n\ge1$ $u^{\sss (n)}_{e}> u^{\sss (n-1)}_{e}$ and $v^{\sss (n)}_{e}< v^{\sss (n-1)}_{e}$ for all $e\in E$. 
We then simply observe that, for all $e\in E$, 
\[u_e^{\sss (n+1)}=H_e(\bs u^{\sss (n)},\bs v^{\sss (n)})> H_e(\bs u^{\sss (n-1)},\bs v^{\sss (n-1)})=u_e^{\sss (n)},\] 
and 
\[v_e^{\sss (n+1)}=H_e(\bs v^{\sss (n)},\bs u^{\sss (n)})< H_e(\bs v^{\sss (n-1)},\bs u^{\sss (n-1)})=v_e^{\sss (n)},\] 
which proves the induction step, and thus concludes the proofs of \eqref{ineq1} and \eqref{ineq2}. In other words we just have proved that $(K_n)_{n\ge 0}$ is indeed a sequence of decreasing sets.

\subsection{Proof that $L(\hat{\bf W})\subseteq K_n$, for all $n\ge 0$.} \label{subsec.2paths.3}
We use an induction argument. Note first that by~\eqref{K0def} and the definition of $K_0$, one has almost surely $L(\hat{\bf W}) \subseteq K_0$, using also the hypothesis $u_0\le c_{p,q}$. 
We now prove the induction step, i.e. that almost surely, if $L(\hat{\bf W}) \subseteq K_n$, for some $n\ge 0$, then also 
$L(\hat{\bf W}) \subseteq K_{n+1}$.

To do so, we first look at $F_{a_k}(\bs w)$ for $1\leq k\leq p$ 
and $\bs w$ such that $u_e\leq w_e\leq v_e$ for all $e\in E$: we have
\[F_{a_k}(\bs w) 
= \frac{S^a_k(\bs w)}
{S^a_k(\bs w)+S^b_q(\bs w)}
-w_{a_k}
\ge \frac{S^a_k(\bs u)}
{S^a_k(\bs u)+S^b_q(\bs v)}- w_{a_k}=H_{a_k}(\bs u,\bs v)- w_{a_k}.
\]
Thus, if  $u_e\leq w_e\leq v_e$ for all $e\in E$ and
\[w_{a_k}< H_{a_k}(\bs u,\bs v),
\]
then $F_{a_k}(\bs w)>0$. Also, for all $1\leq k\leq p$, if $u_e\leq w_e\leq v_e$ for all $e\in E$ and
\[w_{a_k}>H_{a_k}(\bs v,\bs u),
\]
then $F_{a_k}(\bs w)<0$.
The same argument leads to
\ba
w_{b_\ell}<H_{b_\ell}(\bs u,\bs v)\quad &\Longrightarrow \quad F_{b_\ell}(\bs w)>0;\\
w_{b_\ell}>H_{b_\ell}(\bs v,\bs u)\quad &\Longrightarrow \quad F_{b_\ell}(\bs w)<0,
\ea
for all $1\leq \ell\leq q$, and if $u_e\leq w_e\leq v_e$ for all $e\in E$.
These facts imply that, for all $n\ge 0$, and for any $\bs w\in K_n$, the flow of the ODE ${\bs{\dot y}} =F({\bs y})$ started at $\bs w$ converges to $K_{n+1}$ and the convergence is uniform on $K_n$ by a similar argument to step (4) in Section~\ref{sec:cornet}. 
Therefore if we already know that $L(\hat{\bf W}) \subseteq K_n$, then it means that $L(\hat{\bf W}) \cap K_{n+1}$ is an attractor of the flow restricted to $L(\hat{\bf W})$. 
Thus by Theorem~\ref{th:pemantle}, we deduce that $L(\hat{\bf W}) \subseteq K_{n+1}$.

Altogether this proves that $L(\hat{\bf W})$ is almost surely included in the intersection of all the $K_n$'s.

\subsection{Identification of the intersection of the $K_n$'s.} \label{subsec.2paths.4} 
We show here that the intersection of the $K_n$'s is reduced to the single point $\bs w^*$, which appears as the limiting vector in the statement of Theorem~\ref{th:two_paths} (see also \eqref{w*} above).

Since the sequences $(u_e^{\sss (n)})$ and $(v_e^{\sss (n)})$ are all {monotonic} and bounded, 
they all converge. 
We let $\bs u^* = \lim_{n\to+\infty} \bs u^{\sss (n)}$ and $\bs v^*= \lim_{n\to+\infty} \bs u^{\sss (n)}$. 
Because $H$ is continuous on $\mathcal E'$, 
we have 
\[\bs u^*= H(\bs u^*,\bs v^*),\quad \text{and}\quad \bs v^*=H(\bs v^*,\bs u^*).\]
The equation $\bs u^*=H(\bs u^*,\bs v^*)$ can be written as, for all $1\leq k\leq p$, $1\leq\ell\leq q$,
\[u^*_{a_k} = \frac{S^a_k(\bs u^*)}{S^a_k(\bs u^*)+S^b_q(\bs v^*)}
\quad\text{ and }\quad
u^*_{b_\ell} = \frac{S^b_\ell(\bs u^*)}{S^b_\ell(\bs u^*)+S^a_p(\bs v^*)}.\]
Using that $S^a_1(\bs u^*)=u^*_{a_1}$ and $S^b_1(\bs u^*)=u^*_{b_1}$, this implies that 
\begin{equation}\label{eq:alpha_beta}
u^*_{a_1} = 1-S^{b}_q(\bs v^*)=:\alpha
\quad\text{ and }\quad 
u^*_{b_1} = 1-S^{a}_p(\bs v^*)=:\beta,
\end{equation}
and, for all $2\leq k\leq p$, $2\leq \ell\leq q$,
\[
u^*_{a_k} = \frac{S^a_k(\bs u^*)}{S^a_k(\bs u^*)+1-u^*_{a_1}}
\quad\text{ and }\quad
u^*_{b_\ell} = \frac{S^b_\ell(\bs u^*)}{S^b_\ell(\bs u^*)+1-u^*_{b_1}}.
\]
We first show by induction that this implies $u^*_{a_k} = \alpha^k$ 
and $u^*_{b_\ell} = \beta^\ell$ for all $1\leq k\leq p$ and $1\leq \ell\leq q$.
Indeed, if for some $1< k\le p$, and all $1\leq i< k$, $u^*_{a_i} = \alpha^i$, then
\[
u^*_{a_k} = \frac{\left( \frac{1}{u^*_{a_k}}+\frac{1-\alpha^{k-1}}{\alpha^{k-1}(1-\alpha)}\right)^{-1}}{\left( \frac{1}{u^*_{a_k}}+\frac{1-\alpha^{k-1}}{\alpha^{k-1}(1-\alpha)}\right)^{-1}+1-\alpha} 
= \frac{1}{1+(1-\alpha)\left( \frac{1}{u^*_{a_k}}+\frac{1-\alpha^{k-1}}{\alpha^{k-1}(1-\alpha)}\right)}, 
\]
and a straightforward calculation yields that $u^*_{a_k}=\alpha^k$,
as claimed.
The proof of $u^*_{b_\ell} = \beta^\ell$ for all $1\leq\ell\leq q$ is similar.

Since $(\bs u^*,\bs v^*)$ also satisfies the symmetric equation $\bs v^* = H(\bs v^*, \bs u^*)$, 
we get that
\begin{equation}\label{eq:bar_alpha_beta}
v^*_{a_1} = 1-S^{b}_q(\bs u^*)=:\bar\alpha
\quad\text{ and }\quad 
v^*_{b_1} = 1-S^{ a}_p(\bs u^*)=:\bar\beta,
\end{equation}
and $v^*_{a_k} = \bar\alpha^k$, 
$v^*_{b_\ell} = \bar\beta^\ell$ for all $1\leq k\leq p$ and $1\leq\ell\leq q$.
Using this into~\eqref{eq:alpha_beta}, 
and using the definition of $S_p$ and $S_q$ (see~\eqref{eq:def_Sq}), we get
\[\alpha = 1-\frac1{\sum_{i=1}^q \bar\beta^{-i}}
\quad\text{ and }\quad
\beta = 1-\frac1{\sum_{i=1}^p \bar\alpha^{-i}}.
\]
Similarly, using the fact that $u^*_{a_k} = \alpha^k$ and $u^*_{b_\ell} = \beta^\ell$ for all $1\leq k\leq p$ and $1\leq\ell\leq q$, together with Equation~\eqref{eq:bar_alpha_beta}, we get
\[\bar\alpha = 1-\frac1{\sum_{i=1}^q \beta^{-i}}
\quad\text{ and }\quad
\bar\beta = 1-\frac1{\sum_{i=1}^p \alpha^{-i}}.\]
Note that
\begin{equation}\label{eq:first_equiv}
\alpha = 1-\frac1{\sum_{i=1}^q \bar\beta^{-i}}\quad\Leftrightarrow\quad
1-\alpha = \frac{\bar\beta^q(1-\bar\beta)}{1-\bar\beta^q},
\end{equation}
and, similarly,
\begin{equation}\label{eq:sec_equiv}
\bar\beta = 1-\frac1{\sum_{i=1}^p \alpha^{-i}}\quad\Leftrightarrow\quad
1-\bar\beta =  \frac{\alpha^p(1-\alpha)}{1-\alpha^p}.
\end{equation}
For all integer $p\geq 2$ and $x\in [0,1]$, we let 
\[f_p(x) = 1-\frac1{\sum_{i=1}^p x^{-i}} = 1-\frac{x^p(1-x)}{1-x^p}.\]
With this notation, we have $\alpha = f_q(\bar\beta)$ and $\bar\beta = f_p(\alpha)$ (and similarly for $\bar\alpha$ and $\beta$), and thus
\[\alpha = f_q\circ f_p(\alpha) \quad\text{ and } \quad \bar\alpha = f_q\circ f_p(\bar\alpha).\]
We now show that $f_p$ is a contraction for all $p\geq 2$, 
implying that $f_p\circ f_q$ is also a contraction, and thus admits a unique fixed point, 
which implies $\alpha = \bar\alpha$ (and thus $\beta = \bar\beta$).

\begin{lemma}\label{lem:contraction}
For all $p\geq 2$, the function $f_p\colon [0,1]\to \mathbb R$ defined by 
\[f_p(x) := 1-\frac{x^p(1-x)}{1-x^p},\]
is a contraction.
\end{lemma}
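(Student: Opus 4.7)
The plan is to reduce the contraction property to a pointwise derivative bound: since $[0,1]$ is a convex interval, it suffices to show that the continuous extension of $f_p$ to $[0,1]$ is $C^1$ and satisfies $\sup_{x\in[0,1]} |f_p'(x)| < 1$, and then a standard mean-value argument yields the Lipschitz bound.

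First, I would rewrite $f_p(x) = 1 - \frac{x^p}{1+x+\cdots+x^{p-1}}$ to make it manifest that $f_p$ extends continuously to $x=1$ with $f_p(1) = 1 - 1/p$. Then a direct differentiation (using the product or quotient rule on $x^p(1-x)/(1-x^p)$ and simplifying) gives
\[
f_p'(x) \;=\; -\,\frac{x^{p-1}\bigl(p - (p+1)x + x^{p+1}\bigr)}{(1-x^p)^2}.
\]
One checks readily that the numerator and denominator are both nonnegative on $[0,1]$ (for the numerator: the polynomial $p-(p+1)x+x^{p+1}$ has derivative $(p+1)(x^p-1)\le 0$ and equals $0$ at $x=1$, so it is nonnegative on $[0,1]$). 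After expanding, the target inequality $|f_p'(x)| \le 1$ simplifies, remarkably, to the clean polynomial inequality
\[
p\,x^{p-1} - (p-1)\,x^p \;\le\; 1 \qquad \text{for all } x\in[0,1],
\]
which I would establish by observing that the left-hand side has derivative $p(p-1)x^{p-2}(1-x)\ge 0$, is equal to $0$ at $x=0$, and is equal to $1$ at $x=1$; hence it is strictly less than $1$ on $[0,1)$.

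The main obstacle is that the inequality above is \emph{saturated} at $x=1$, so naively one only obtains Lipschitz constant equal to $1$, not a strict contraction. The key observation to overcome this is that at $x=1$ the expression for $f_p'(x)$ is a $0/0$ indeterminate form, and a Taylor expansion around $x=1$ (using $p-(p+1)x+x^{p+1} = \tfrac{p(p+1)}{2}(x-1)^2 + O((x-1)^3)$ and $(1-x^p)^2 = p^2(x-1)^2 + O((x-1)^3)$) shows that $f_p'$ extends continuously to $x=1$ with
\[
\lim_{x\to 1} f_p'(x) \;=\; -\,\frac{p+1}{2p},
\]
whose absolute value is $\le 3/4$ for every $p\ge 2$. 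Consequently, $|f_p'|$ is a continuous function on the compact interval $[0,1]$ with value strictly less than $1$ at every point, so by compactness $K_p := \sup_{x\in[0,1]} |f_p'(x)| < 1$, and the mean value theorem then yields $|f_p(x)-f_p(y)| \le K_p\,|x-y|$, proving that $f_p$ is a contraction.
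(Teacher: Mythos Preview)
Your proof is correct and shares the same overall strategy as the paper's: both compute
\[
f_p'(x)=-\frac{x^{p-1}\bigl(p-(p+1)x+x^{p+1}\bigr)}{(1-x^p)^2}
\]
and reduce $|f_p'(x)|\le 1$ to the inequality $px^{p-1}-(p-1)x^p\le 1$, whose one-line monotonicity proof you give. The difference lies in how strictness is obtained. The paper carries an explicit margin $\varepsilon>0$ through the computation, showing directly that $|f_p'(x)|\le 1-\varepsilon$ by analysing an auxiliary polynomial $\varphi$ and its derivative. You instead establish the strict pointwise bound $|f_p'(x)|<1$ on $[0,1)$, compute the limit $f_p'(1)=-(p+1)/(2p)$ by Taylor expansion at $x=1$ (equivalently, from the smooth rewriting $f_p(x)=1-x^p/(1+\dots+x^{p-1})$), and then invoke compactness of $[0,1]$ to conclude $\sup|f_p'|<1$. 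Your route is a bit more conceptual and avoids the auxiliary functions $\varphi,\psi$; it also yields the explicit value of $f_p'(1)$. The paper's route is more self-contained in that it never appeals to a compactness/continuity argument and in principle produces an explicit $\varepsilon$.
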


\begin{proof}
First note that $f_p$ can be extended to a continuous function on $[0,1]$ by setting $f_p(1) = 1-\nicefrac1p$. To prove that $f_p$ is a contraction, we show that there exists $\varepsilon>0$ such that, for all $x\in[0,1]$, $|f'_p(x)|\leq 1-\varepsilon$. 
First note that, for all $x\in [0,1)$,
\[f'_p(x) = -\frac{x^{p-1}(x^{p+1}-(p+1)x+{p})}{(1-x^p)^2}.\]
For all $\varepsilon>0$, we have that 
\begin{linenomath}\begin{align*}
f'_p(x)\leq 1-\varepsilon
&\;\Leftrightarrow\; x^{2p}-(p+1)x^p +px^{p-1}
\leq 1-\varepsilon -2(1-\varepsilon)x^p+(1-\varepsilon) x^{2p}\\
&\;\Leftrightarrow\; 0\leq 1-\varepsilon - px^{p-1} + (p-1+2\varepsilon)x^p-\varepsilon x^{2p}
=:\varphi(x).
\end{align*}
\end{linenomath}
To understand $\varphi(x)$ on $[0,1]$, we look at its derivative: for all $x\in [0,1]$,
\[\varphi'(x) 
= -p(p-1)x^{p-2}+p(p-1+2\varepsilon)x^{p-1}-2p\varepsilon x^{2p-1} = x^{p-2}\psi(x),\]
where
\[\psi(x) = -p(p-1)+p(p-1+2\varepsilon)x-2p\varepsilon x^{p+1}.\]
Note that $\psi'(x) = p(p-1+2\varepsilon)-2p(p+1)\varepsilon x^p$ is non-negative if and only if
\[x^p\leq \frac{p(p-1+2\varepsilon)}{2p(p+1)\varepsilon}.\]
For all $\varepsilon$ small enough, the right-hand side of this inequality is larger than one (because $p\geq 2$), implying that for such $\varepsilon$, $\psi'(x)$ is non-negative and thus $\psi$ is non-decreasing on $[0,1]$.
And thus, for all $x\in [0,1]$, $\psi(x)\leq \psi(1) = 0$.
Therefore, since $\varphi'(x) = x^{p-2}\psi(x)$, we get that $\varphi'(x)\leq 0$ for all $x\in [0,1]$,
and thus $\varphi$ is non-increasing on $[0,1]$. This implies that
$\varphi(x)\leq \varphi(0) = 1-\varepsilon$, and thus concludes the proof.
\end{proof}

We have thus proved that $\alpha = \bar\alpha$ and $\beta = \bar\beta$, 
where we recall that $(\alpha, \beta)$ is the unique solution of $\alpha = f_q(\beta)$ and $\beta = f_p(\alpha)$ in $(0,1)^2$, i.e.
\begin{equation}\label{eq:system2}
\begin{cases}
\alpha = 1-\frac{\beta^q(1-\beta)}{1-\beta^q}&\\
\beta = 1-\frac{\alpha^p(1-\alpha)}{1-\alpha^p}.&
\end{cases}
\end{equation}
It only remains to show that $(\alpha,\beta)$ is also {a} solution of~\eqref{eq:system},
and that it is the unique solution of~\eqref{eq:system} on $(0,1)^2$.
Since $(\alpha,\beta)$ is {a} solution of~\eqref{eq:system2}, we get that
\[(1-\alpha)(1-\beta^q) =\beta^q(1-\beta)
\;\Rightarrow\;  1-\alpha=\beta^q(2-\alpha-\beta),\]
and, similarly,
\[(1-\beta)(1-\alpha^p) =\alpha^p(1-\alpha)
\;\Rightarrow\;  1-\beta=\alpha^p(2-\alpha-\beta).\]
This implies 
\[2-\alpha-\beta = \frac{1-\beta}{\alpha^p} = \frac{1-\alpha}{\beta^q},\]
and thus $(\alpha,\beta)$ satisfies the second equation of~\eqref{eq:system}.
Furthermore,
\[\alpha^p+\beta^q = \frac{1-\beta}{2-\alpha=\beta} + \frac{1-\alpha}{2-\alpha-\beta} = 1,\]
implying that $(\alpha,\beta)$ is solution of~\eqref{eq:system}.

To prove that~\eqref{eq:system} has a unique solution on $(0,1)^2$, we show that any solution of~\eqref{eq:system} on $(0,1)^2$ is also a solution of~\eqref{eq:system2} (since the latter has a unique solution on $(0,1)^2$, this concludes the proof).
Indeed, if $(\alpha,\beta)\in (0,1)^2$ is a solution of~\eqref{eq:system}, then
\[1-\alpha = \frac{\beta^q(1-\beta)}{\alpha^p} = \frac{\beta^q(1-\beta)}{1-\beta^q},
\]
which implies the first equation of~\eqref{eq:system2}. The second equation of~\eqref{eq:system2} can obtained similarly. 
Therefore, if $(\alpha, \beta)$ is a solution of~\eqref{eq:system}, then it is also a solution of~\eqref{eq:system2}, which concludes the proof of Theorem~\ref{th:two_paths}.


\section{The lozenge: proof of Proposition~\ref{prop:lozenge}}
\label{sec:lozenge}
First recall that, by Proposition~\ref{prop.stoc.approx} and the specificities of the lozenge graph, if we let $\hat {\bf W}(n) = \frac{{\bf W}(n)}{n+2}$ ($\forall n\geq 0$), 
then, for all $n\geq 0$, $\hat{\bf W}(n)\in\mathcal E'$, where
\[\mathcal E' :=
\{\bs w=(w_1, w_2, w_3, w_4, w_5)\in \mathcal E \colon
w_2 + w_5 = 1, w_1+w_4\geq 1, w_2 \leq w_1+w_3, w_5\leq w_3+w_4\},\]
with $\mathcal E$ as defined in~\eqref{def.E}. 
The first condition ($w_2+w_5=1$) is satisfied by $\hat{\bf W}(n)$ because, by definition of the model, each ant reinforces either edge 2 or edge 5 but not both. The second condition ($w_1+w_4\geq 1$) 
is redundant with the fact that $\bs w \in \mathcal E$ (it is the same condition as $\pi_{\bs w}(N)\ge 1$). The third condition ($w_2 \leq w_1+w_3$) holds because each ant that reinforces edge~2 also reinforces either edge~1 or edge~3. The fourth condition is the symmetric of the third one.

{\begin{remark}
As in Section~\ref{sec:cornet}, 
it is convenient to define $\hat{\bf W}(n)$ as ${\bf W}(n)/(n+2)$ rather than as ${\bf W}(n)/(n+1)$. This is because with the former definition, we have $\hat{W}_2(n) + \hat{W}_5(n) = 1$ for all $n\geq 0$, which is not true with the latter definition (although it holds asymptotically as $n\to+\infty$).
\end{remark}}

Moreover, for all $n\geq 0$,
\[\hat {\bf W}(n+1) =\hat  {\bf W}(n) + \frac1{n+3}\big(F(\hat{\bf W}(n)) + \xi_{n+1}\big),\]
with $\xi_{n+1}$ some martingale difference, and where
$F_i(w) = p_i(w) - w_i$, with $p_i(w) = \mathbb P(e_i\in \gamma_{n+1}\mid \hat{\bf W}(n) = w)$ (note that this probability does not depend on~$n$).

The first step in the proof of Proposition~\ref{prop:lozenge} is to compute these probabilities $p_i(\bs w)$, for $1\le i \le 5$. 
A straightforward calculation, which we carry out in Section~\ref{sec:app}, shows that 
for all $\bs w\in \mathcal E'$,
\begin{linenomath}\begin{align}
p_1(\bs w) &= \frac{w_1}{w_1+w_4} + \frac{w_4}{w_1+w_4}\cdot
\frac{\frac{w_1}{w_3+w_4+w_5}\left(\frac{w_4}{w_1+w_4}+\frac{w_3}{w_1+w_2+w_3}\right)}
{1-\frac{w_4^2}{(w_1+w_4)(w_3+w_4+w_5)}-\frac{w_3^2}{(w_1+w_2+w_3)(w_3+w_4+w_5)}}\notag\\
p_2(\bs w)&= \frac{w_2(w_1(w_3+w_4+w_5)+w_3w_4)}{(w_1+w_4)(w_3+w_2w_5+\frac{w_1w_4}{w_1+w_4})}\notag\\
p_3(\bs w) &= \frac{w_3\left(\frac{w_1}{w_1+w_2+w_3}+\frac{w_4}{w_3+w_4+w_5}\right)}
{w_1+w_4 - \left(\frac{w_1^2}{w_1+w_2+w_3}+\frac{w_4^2}{w_3+w_4+w_5}\right)}.
\label{eq:formule_lozenge}
\end{align}
\end{linenomath}
By symmetry, we also have $p_4(\bs w) = p_1(w_4, w_5, w_3, w_1, w_2)$ 
and $p_5(\bs w) = p_2(w_4, w_5, w_3, w_1, w_2)$.
Furthermore, since, by definition of the model, each ant reinforces either edge 2, or edge 5, but not both, we have $p_2(\bs w) = 1-p_5(\bs w)$.
Note also that, for all $\bs w\in\mathcal E'$,
\begin{equation}\label{F2.lozenge}
F_2(\bs w) 
= p_2(\bs w) - w_2 
=  \frac{w_2w_5 (\frac{w_1}{w_1+w_4} - w_2)}{w_3+w_2w_5+\frac{w_1w_4}{w_1+w_4}},
\end{equation}
and 
\begin{equation}\label{F3.lozenge}
p_3(\bs w)  = \frac{w_3(\alpha + \beta)}{\alpha(w_3+w_2) + \beta(w_3+w_5)},
\end{equation}
with 
\begin{equation}\label{alpha.beta}
{\alpha}:= \frac{w_1}{w_1+w_2+w_3}, \qquad \text{and}\qquad {\beta} := \frac{w_4}{w_3+w_4+w_5},
\end{equation}
where by convention we set ${\alpha} = 0$ when $w_1=0$, and similarly ${\beta}=0$, when $w_4=0$. 

\bigskip
The second step is the following fact. 
\begin{lemma}\label{lem:liminf_lozenge}
Almost surely $\liminf_{n\to +\infty} \frac{W_1(n)}{n}>0$, and by symmetry $\liminf_{n\to +\infty} \frac{W_4(n)}{n}>0$ {almost surely}.
\end{lemma}
\begin{proof}
Note that for all $\bs w\in \mathcal E'$, 
\[\frac{p_1(\bs w)}{w_1} \ge \frac{1}{w_1+w_4}  + \frac{w_4^2}{w_1+w_4} \cdot
\frac{1}{(w_3+w_4+w_5)(w_1+w_4) -w_4^2 }.\]
When $w_1\to 0$, we have $w_4\to 1$ because $1-w_1\le w_4\le 1$ for all $\bs w\in \mathcal E'$.
Using in addition the fact that $w_3 + w_5\le 2$ for all $\bs w\in \mathcal E'$, we get that
\[\liminf_{w_1\to 0} \frac{p_1(\bs w)}{w_1} \ge \frac 32,\]
and then the result follows from Corollary~\ref{prop.urne}.  
\end{proof}

We next prove the following result. 
\begin{lemma}\label{sign.F}
For all $\bs w\in \mathcal E'$, one has 
\begin{itemize}
\item[$(i)$] If $w_2 < \frac{w_1}{w_1+w_4}$, then $F_2(\bs w) > 0$, and if $w_2 > \frac{w_1}{w_1+w_4}$, then $F_2(\bs w) < 0$. 
\item[($ii)$] If $w_2\ge  \frac{w_1}{w_1+w_4}$, and $0<w_1<w_4$, then $F_1(\bs w)w_4 - F_4(\bs w)w_1 >0$. Likewise, if $w_2\le  \frac{w_1}{w_1+w_4}$, and $w_4<w_1<1$, then $F_1(\bs w)w_4 - F_4(\bs w)w_1 <0$. 

\end{itemize}
\end{lemma}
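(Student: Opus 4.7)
My plan is as follows. Part $(i)$ is immediate from formula~\eqref{F2.losange}: on $\mathcal E'$ the denominator $w_3 + w_2 w_5 + \frac{w_1 w_4}{w_1+w_4}$ is strictly positive, so $F_2(\bs w)$ has the same sign as $w_2 w_5 \bigl(\frac{w_1}{w_1+w_4} - w_2\bigr)$; since $0 < w_2 < 1$ forces $w_2 w_5 > 0$, the two sign statements fall out at once. For part $(ii)$, the cross terms $w_1 w_4$ cancel, so $F_1 w_4 - F_4 w_1 = p_1 w_4 - p_4 w_1$, and the whole task reduces to controlling the sign of this difference.

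Introducing the abbreviations $\sigma := w_1 + w_4$, $\alpha_1 := w_1 + w_2 + w_3$, $\alpha_4 := w_3 + w_4 + w_5$ and $A := \alpha_1 \alpha_4 - w_3^2$, the expressions in~\eqref{eq:formule_losange} reorganise into the compact forms
$$
p_1 \;=\; \frac{w_1 \bigl(A + w_3 w_4\bigr)}{\sigma A - w_4^2 \alpha_1}, \qquad p_4 \;=\; \frac{w_4 \bigl(A + w_3 w_1\bigr)}{\sigma A - w_1^2 \alpha_4},
$$
in which the two denominators are strictly positive (since $p_1, p_4$ are genuine probabilities). Reducing $p_1 w_4 - p_4 w_1$ to a single fraction then yields
$$
p_1 w_4 - p_4 w_1 \;=\; \frac{w_1 w_4 \bigl[\, A\, P + \sigma A\, w_3 (w_4 - w_1) + w_1 w_3 w_4\, Q \,\bigr]}{(\sigma A - w_4^2 \alpha_1)(\sigma A - w_1^2 \alpha_4)},
$$
where I write $P := w_4^2 \alpha_1 - w_1^2 \alpha_4$ and $Q := w_4 \alpha_1 - w_1 \alpha_4$, so the whole question reduces to the sign of the square bracket.

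The payoff step, and the real heart of the proof, is the observation that after using $w_5 = 1 - w_2$ one has the decomposition
$$
Q \;=\; w_3 (w_4 - w_1) \;+\; (\sigma w_2 - w_1),
$$
whose two summands are respectively non-negative under the two hypotheses $w_4 > w_1$ and $w_2 \geq w_1/\sigma$ of $(ii)$; in particular $Q \geq 0$. Combined with the elementary identity $P = w_4\, Q + w_1 \alpha_4 (w_4 - w_1)$ (checked by expansion), this forces $P > 0$ strictly. Since $A > 0$, each of the three summands in the bracket is then non-negative and $A\, P$ is strictly positive, so the bracket is strictly positive and $F_1 w_4 - F_4 w_1 > 0$. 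The reverse case is obtained at no extra cost by applying this to the image of $\bs w$ under the symmetry $(w_1, w_2) \leftrightarrow (w_4, w_5)$, which swaps $p_1$ with $p_4$ and therefore negates $F_1 w_4 - F_4 w_1$. The only non-trivial step is the algebraic bookkeeping needed to cast $p_1 w_4 - p_4 w_1$ in the bracketed form above; once that is in hand, the hypotheses of $(ii)$ slot in almost tautologically through the clean decomposition of $Q$.
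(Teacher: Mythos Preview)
Your argument is correct. For part~$(i)$ you do exactly what the paper does. For part~$(ii)$ the two proofs are really the same argument in different clothing: the paper works with the ratios $\lambda_1=w_1/\alpha_1$ and $\lambda_4=w_4/\alpha_4$, shows $\lambda_1\le\lambda_4$ under the hypotheses, and then compares the two fractions in~\eqref{eq:formule_losange} by monotonicity; you instead clear denominators first and isolate the bracket $A\,P+\sigma A\,w_3(w_4-w_1)+w_1w_3w_4\,Q$. The dictionary between the two is $Q=\alpha_1\alpha_4(\lambda_4-\lambda_1)$ and $P=\alpha_1\alpha_4(w_4\lambda_4-w_1\lambda_1)$, so your key decomposition $Q=w_3(w_4-w_1)+(\sigma w_2-w_1)$ is exactly the computation the paper performs to bound $\lambda_4-\lambda_1$, and your identity $P=w_4Q+w_1\alpha_4(w_4-w_1)$ corresponds to their passage from $\lambda_4\ge\lambda_1$ to $w_4\lambda_4>w_1\lambda_1$. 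What your presentation buys is that the strict positivity is transparently located in the single term $w_1\alpha_4(w_4-w_1)>0$, so the conclusion is visibly robust even in the degenerate case $w_3=0$ (where the paper's displayed strict inequality $\lambda_1<\lambda_4$ can collapse to an equality, although their overall conclusion survives for the same reason yours does).
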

\begin{proof}
The first claim follows directly from~\eqref{F2.lozenge}. For the second claim, note that if $w_2\ge \frac{w_1}{w_1+w_4}$, then with the notation of~\eqref{alpha.beta}, one has ${\lambda_{1}} \le \frac{w_1}{w_1+\frac{w_1}{w_1+w_4} + w_3}$, and if in addition $w_1<w_4$, we get 
\[{\lambda_{1}}\le \frac{w_1}{w_1+\frac{w_1}{w_1+w_4} + w_3}< \frac{w_4}{w_4+\frac{w_4}{w_1+w_4} + w_3} \le {\lambda_{4}},\]
since $w_2 \ge \frac{w_1}{w_1+w_4}$ is equivalent to $w_5 \le \frac{w_4}{w_1+w_4}$ (using that for $w\in \mathcal E$, $w_5 = 1-w_2$). 
Then, we get using \eqref{eq:formule_lozenge}, and again ${\lambda_{4}} \ge {\lambda_{1}}$, and $w_4>w_1$, 
\ba
&F_1(\bs w)w_4 - F_4(\bs w)w_1 
= p_1(\bs w)w_4 - p_4(\bs w)w_1 \\
 &= \frac{w_1w_4}{w_1+w_4} \left\{ 
\frac{ \frac{w_4{\lambda_{4}}}{w_1+w_4}+\frac{w_3w_4}{(w_1+w_2+w_3)(w_3+w_4+w_5)}}
{1- \frac{w_4{\lambda_{4}}}{w_1+w_4}-\frac{w_3^2}{(w_1+w_2+w_3)(w_3+w_4+w_5)}} - \frac{ \frac{w_1{\lambda_{1}} }{w_1+w_4}+\frac{w_3w_1}{(w_1+w_2+w_3)(w_3+w_4+w_5)}}
{1- \frac{w_1{\lambda_{1}}}{w_1+w_4}-\frac{w_3^2}{(w_1+w_2+w_3)(w_3+w_4+w_5)}} \right\}
> 0, 
\ea
proving the first statement of (ii). The second statement follows from similar arguments. 
\end{proof}

\begin{proof}[Proof of Proposition \ref{prop:lozenge}]
The proof 	is done in three steps, all three using Corollary~\ref{cor:pemantle}:
\begin{enumerate}[(1)]
\item we first prove that $L(\hat {\bs W})\subset \mathcal H:= \mathcal E'\cap \{w_2={\frac{w_1}{w_1+w_4}}=\nicefrac12\}$,
\item we then infer that $L(\hat {\bs W})\subset \mathcal H':= \mathcal H \cap \{w_3=1/2\}$,
\item and finally conclude that $L(\hat {\bs W})= \{(w^*,  \nicefrac12, \nicefrac12, w^*, \nicefrac12)\}$, where $w^*$ is the unique solution in $[0,1]$ of the equation $2x^3+4x^2-2x-\frac32=0$, as claimed.
\end{enumerate}

(1) We first show that, for all starting point $\bs w\in \mathcal U:=\{\bs w \in \mathcal E' : w_1w_4\neq 0\}$, $\Phi_t(\bs w)$ converges to the set $\mathcal H$. 
Let $(\Phi_t(\bs w)_i)_{i=1,\dots,5}$ denote the coordinates of the vector $\Phi_t(\bs w)$, and let 
\[u(t) = \Phi_t(\bs w)_2 - \frac12, \quad \text{and}\quad 
v(t) = \frac{\Phi_t(\bs w)_1}{\Phi_t(\bs w)_1+ \Phi_t(\bs w)_4} - \frac12.\] 
By definition, taking the derivative along the flow, we get
\[u'(t) = F_2 (\Phi_t(\bs w)), \quad \text{and}\quad 
v'(t) = \frac{F_1(\Phi_t(\bs w))\cdot \Phi_t(\bs w)_4 - F_4(\Phi_t(\bs w))\cdot\Phi_t(\bs w)_1}{(\Phi_t(\bs w)_1 + \Phi_t(\bs w)_4)^2 }.\]
Our aim is to show that $h(t):=\max(|u(t)|,|v(t)|)$  is a Lyapunov function, i.e.~that it is decreasing, for all $t$ smaller than the (possibly infinite) time when it reaches $0$, and that it converges to $0$. To see this, first note that if at some time $t$, one has $0\le v(t)<u(t)$, 
then by Lemma~\ref{sign.F}$(i)$, $h'(t) = u'(t) = F_2(\Phi_t(\bs w)) <0$. By symmetry, if $u(t)<v(t) \le 0$, then $h'(t) = -u'(t) <0$.   
Second, note that, if $v(t) < 0 \le u(t)$, then by Lemma~\ref{sign.F}, we have $u'(t) <0$ and $v'(t)>0$, which entails that $h$ is decreasing in a neighborhood of~$t$ since both its right and left derivatives are negative at this time (it is differentiable if $|u(t)|\neq |v(t)|$). 
Symmetrically, the same holds if $u(t)<0\le  v(t)$. 
Finally when $u(t) = v(t)\neq 0$, we see that $u'(t) = 0$ while 
$v'(t)\neq 0$, so that in a neighborhood of~$t$, 
$u(t)\neq v(t)$, and thus by the previous argument $h$ is again decreasing in a neighborhood of~$t$. 
As a consequence, $h$ is decreasing up to the (possibly infinite) time when it reaches~$0$, and thus converges. Note also that the previous arguments show that~$h$ has a negative right-derivative at any non-zero value, which implies that its only possible limit is zero.
This indeed implies that $\Phi_t(\bs w)$ converges to the set $\mathcal H:= \mathcal E'\cap \{w_2={\frac{w_1}{w_1+w_4}}=\nicefrac12\}$, as claimed.
Moreover, the convergence is uniform on $\mathcal H_{\varepsilon} = \{\nicefrac12-\varepsilon\leq w_2\leq \nicefrac12+\varepsilon\}\cap \{\nicefrac12-\varepsilon\leq \frac{w_1}{w_1+w_4}\leq \nicefrac12+\varepsilon\}$ for $\varepsilon>0$ small enough, by a similar argument to Step (4) in Section~\ref{sec:cornet}.
Therefore, by Corollary~\ref{cor:pemantle}, $L(\hat{\bs W})\subset \mathcal H$ as claimed.

(2) For all $\bs w\in \mathcal H$,
\[F_3(\bs w) = \frac{w_3}{w_3+\nicefrac12} - w_3,\]
thus $F_3(\bs w) >0$, if $w_3<1/2$, and $F_3(\bs w)<0$ if $w_3>1/2$.
Thus, for all $\bs w\in \mathcal H$, $\Phi_t(\bs w)$ converges to $\mathcal H'$ as $t\to\infty$.
Furthermore, the convergence is uniform on $\mathcal H\cap\{\nicefrac12-\varepsilon\leq w_3\leq \nicefrac12+\varepsilon\}$ for $\varepsilon>0$ small enough, by similar argument to step (4) in Section~\ref{sec:cornet}, which concludes this step by Corollary~\ref{cor:pemantle}.

(3) For all $\bs w\in \mathcal H'$, 
\[F_1(\bs w) = 
\frac12+\frac12\cdot\frac{\frac{w_1}{1+w_1}\left(\frac12+\frac{\nicefrac12}{1+w_1}\right)}
{1-\frac12\frac{w_1}{1+w_1}-\frac{\nicefrac14}{(1+w_1)^2}}-w_1
=\frac12+\frac{w_1(2+w_1)}{2w_1^2+6w_1+3}-w_1. 
\]
Then one can check that $F_1(\bs w)>0$ if and only if
$f(w_1) >0$ where, for all $x\in \mathbb R$,
\[f(x)= -2x^3-4x^2+2x+\frac32.\]
Note that $f$ is a polynomial of degree~3, it thus has at most three zeros in $\mathbb R$.
One can check that $f'$ is positive on $((-2-\sqrt{7})/3,(-2+\sqrt{7})/3)$ and non-positive on the complement of this set. Thus, on $[0,1]$, $f$ is non-decreasing on $[0, (-2+\sqrt{7})/3]$ and non-increasing on $[(-2+\sqrt{7})/3, 1]$. Since $f(0)= \nicefrac32>0$ and $f(1)=-\nicefrac52$, we get that there exists a unique solution to $f(x) = 0$ on $[0,1]$, which we call $w^*$.
Moreover, $f(x)>0$ for all $x\in [0, w^*)$ and $f(x)<0$ for all $x\in (w^*, 1]$. 
The conclusion follows by Corollary~\ref{cor:pemantle} (to show assumption (iii), we use a similar argument to step (4) in Section~\ref{sec:cornet}).
\end{proof}

\appendix
\section{Calculating $F$ in the lozenge case: proof of~\eqref{eq:formule_lozenge}}
\label{sec:app}

We use the same notation as in Section~\ref{sec:lozenge}.
To prove~\eqref{eq:formule_lozenge}, we use the {electrical networks} method (see, e.g.~\cite{LL}).
We start by calculating $p_2(\bs w)$: this is the probability that a random walker on the graph with weights $\bs w = (w_i)_{1\leq i\leq 5}$, starting from $N$, crosses Edge~2 before crossing Edge~5.
This is equal to the probability that a walker starting from $N$ reaches $F_2$ before $F_5$ on the weighted graph of Figure~\ref{fig:p2}. 
We decompose $p_2(\bs w)$ according to the first step of the walker: 
\begin{equation}\label{eq:p2}
p_2(\bs w) = \frac{w_1}{w_1+w_4}\cdot p_{22}(\bs w)+\frac{w_4}{w_1+w_4}\cdot p_{25}(\bs w),
\end{equation}
where $p_{22}(\bs w)$ (resp.\ $p_{25}(\bs w)$) denotes the probability to reach $F_2$ before $F_5$ starting from $P_2$ (resp.\ $P_5$) on the graph of Figure~\ref{fig:p2}.
By classical formulas for random walks on weighted graphs (see, e.g.~\cite{LL}), 
\[p_{22}(\bs w) = \frac{\mathcal C_{P_2F_2}(\bs w)}{\mathcal C_{P_2F_2}(\bs w)+\mathcal C_{P_2F_5}(\bs w)},\]
where $\mathcal C_{XY}(\bs w)$ is the effective conductance between vertices $X$ and $Y$ when the edge weights are given by~$\bs w$. By definition of effective conductances, the effective conductance of a single edge is its weight. Thus, $\mathcal C_{P_2F_2} = w_2$.
Moreover, the conductance of two edges in parallel is the sum of their effective conductances, the effective conductance of two edges in series is the inverse of the sum of the inverses of their effective conductances.
\begin{figure}
\begin{center}
\includegraphics[width=15cm]{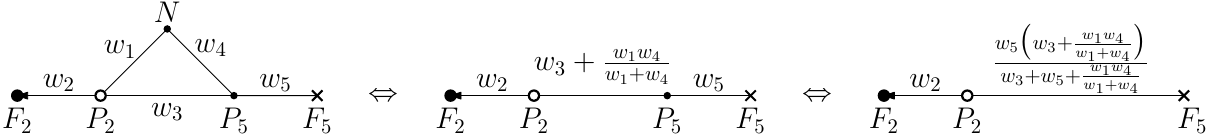}
\end{center}
\caption{Notation for the proof of \eqref{eq:formule_lozenge}, and calculation of $p_{22}(w)$, the probability of reaching $F_2$ before $F_5$ starting from $P_2$.}
\label{fig:p2}
\end{figure}
Using these formulas, we get (see Figure~\ref{fig:p2} for details)
\[\mathcal C_{P_2F_2}(\bs w) 
= \frac{\big(w_3+\frac{w_1w_4}{w_1+w_4}\big)w_5}{w_3+w_5+\frac{w_1w_4}{w_1+w_4}}.
\]
We thus get
\ba
p_{22}(\bs w)
&= \frac{w_2}{w_2 + \frac{\big(w_3+\frac{w_1w_4}{w_1+w_4}\big)w_5}{w_3+w_5+\frac{w_1w_4}{w_1+w_4}}}
= \frac{w_2\big(w_3+w_5+\frac{w_1w_4}{w_1+w_4}\big)}{w_2\big(w_3+w_5+\frac{w_1w_4}{w_1+w_4}\big) + \big(w_3+\frac{w_1w_4}{w_1+w_4}\big)w_5}
= \frac{w_2\big(w_3+w_5+\frac{w_1w_4}{w_1+w_4}\big)}{w_3+w_2w_5+\frac{w_1w_4}{w_1+w_4}},
\ea
because $w_2 +w_5=1$ for all $\bs w\in\mathcal E'$.
By symmetry,
\[p_{25}(\bs w)= 1- \frac{w_5\big(w_2+w_3+\frac{w_1w_4}{w_1+w_4}\big)}{w_3+w_2w_5+\frac{w_1w_4}{w_1+w_4}}
= \frac{(1-w_5)\big(w_3+\frac{w_1w_4}{w_1+w_4}\big)}{w_3+w_2w_5+\frac{w_1w_4}{w_1+w_4}}
= \frac{w_2\big(w_3+\frac{w_1w_4}{w_1+w_4}\big)}{w_3+w_2w_5+\frac{w_1w_4}{w_1+w_4}}.
\]
Thus,~\eqref{eq:p2} becomes
\ba
p_2(\bs w) 
&= \frac{w_1}{w_1+w_4}\cdot \frac{w_2\big(w_3+w_5+\frac{w_1w_4}{w_1+w_4}\big)}{w_3+w_2w_5+\frac{w_1w_4}{w_1+w_4}}
+\frac{w_4}{w_1+w_4}\cdot\frac{w_2\big(w_3+\frac{w_1w_4}{w_1+w_4}\big)}{w_3+w_2w_5+\frac{w_1w_4}{w_1+w_4}}\\
&= \frac{w_2(w_1(w_3+w_4+w_5)+w_3w_4)}{w_3+w_2w_5+\frac{w_1w_4}{w_1+w_4}},
\ea
as claimed.

\medskip
We now calculate $p_3(\bs w)$: we decompose on the first step of the random walker to get
\begin{equation}\label{eq:p3}
p_3(\bs w) 
= \frac{w_1}{w_1+w_4}\cdot p_{32}(\bs w) + \frac{w_4}{w_1+w_4}\cdot p_{35}(\bs w),
\end{equation}
where $p_{32}(\bs w)$ (resp.\ $p_{35}(\bs w)$) is the probability to cross edge 3 before reaching $F$ starting from $P_2$ (resp.\ $P_5$), when the edge weights are given by $\bs w$.
Decomposing over the first weight of a random walker starting at $P_2$, we get
\[p_{32}(\bs w) = \frac{w_1}{w_1+w_2+w_3}\cdot p_3(\bs w) + \frac{w_3}{w_1+w_2+w_3},\]
and similarly for $p_{35}(\bs w)$. 
Using this in~\eqref{eq:p3}, we get
\ba
&p_3(\bs w) \\
&= \frac{w_1}{w_1+w_4}\bigg(\frac{w_1}{w_1+w_2+w_3}\cdot p_3(\bs w) + \frac{w_3}{w_1+w_2+w_3}\bigg)
+\frac{w_1}{w_1+w_4}\bigg(\frac{w_4}{w_3+w_4+w_5}\cdot p_3(\bs w) + \frac{w_3}{w_3+w_4+w_5}\bigg),
\ea
which implies
\ba
&\bigg(1-\frac{w_1^2}{(w_1+w_4)(w_1+w_2+w_3)}-\frac{w_4^2}{(w_1+w_4)(w_3+w_4+w_5)}\bigg)p_3(\bs w)\\
&\hspace{8cm}= \frac{w_3}{w_1+w_4}\bigg(\frac{w_1}{w_1+w_2+w_3}+\frac{w_4}{w_3+w_4+w_5}\bigg).
\ea
This indeed gives the formula for $p_3(\bs w)$ announced in~\eqref{eq:formule_lozenge}.

\medskip
Finally, we show how to calculate $p_1(\bs w)$: again, we decompose according to the first step of the walker:
\begin{equation}\label{eq:p1_un}
p_1(\bs w) 
= \frac{w_1}{w_1+w_4} + \frac{w_4}{w_1+w_4}\cdot p_{15}(\bs w),
\end{equation}
where $p_{15}(\bs w)$ is the probability to cross edge~1 before reaching $F$ starting from $P_5$.
Decomposing according to the first step again, we get
\ban
p_{15}(\bs w)
&= \frac{w_4}{w_3+w_4+w_5}\cdot p_1(\bs w) + \frac{w_3}{w_3+w_4+w_5}\cdot p_{12}(\bs w)\notag\\
&= \frac{w_4}{w_3+w_4+w_5}\bigg( \frac{w_1}{w_1+w_4} + \frac{w_4}{w_1+w_4}\cdot p_{15}(\bs w)\bigg)+ \frac{w_3}{w_3+w_4+w_5}\cdot p_{12}(\bs w)\label{eq:p1_bis}
\ean
where $p_{12}(\bs w)$ is the probability to cross edge~1 before reaching $F$ starting from $P_2$.
We have used~\eqref{eq:p1_un} in the second equality.
Finally, we have
\begin{equation}\label{eq:p1_ter}
p_{12}(\bs w)= \frac{w_1}{w_1+w_2+w_3} + \frac{w_3}{w_1+w_2+w_3}\cdot p_{15}(\bs w).
\end{equation}
Using~\eqref{eq:p1_ter} in~\eqref{eq:p1_bis} we get
\[p_{15}(\bs w) 
= \frac{\frac{w_1}{w_1+w_4}\cdot\frac{w_4}{w_3+w_4+w_5}+ \frac{w_1}{w_1+w_2+w_3}\cdot\frac{w_3}{w_3+w_4+w_5}}
{1-\frac{w_4^2}{(w_1+w_4)(w_3+w_4+w_5)}-\frac{w_3^2}{(w_1+w_2+w_3)(w_3+w_4+w_5)}},\]
and we can then use in~\eqref{eq:p1_un} to get the formula for $p_1(\bs w)$ announced in~\eqref{eq:formule_lozenge}.

\bibliographystyle{alpha}
\bibliography{Fourmis}

\end{document}